\newtheorem{theorem}{Theorem}
\newtheorem{proposition}[theorem]{Proposition}
\newtheorem{corollary}[theorem]{Corollary}
\newtheorem{lemma}[theorem]{Lemma}
\newtheorem{remark}[theorem]{Remark}
\newcommand{\norm}[1]{\left\Vert#1\right\Vert}
\newcommand{\abs}[1]{\left\vert#1\right\vert}
\newcommand{\R}{{\mathbb R}}
\newcommand{\bN}{{\mathbb N}}
\newcommand{\bC}{{\mathbb C}}
\newcommand{\Z}{{\mathbb Z}}
\newcommand{\bP}{{\mathbb P}}
\newcommand{\cA}{{\mathcal{A}}}
\newcommand{\eps}{\varepsilon}
\def\be{\begin{equation}}
\def\ee{\end{equation}}
\def\o{\omega}
\numberwithin{theorem}{section}
\numberwithin{equation}{section}
\numberwithin{figure}{section}
\author{}
\date{}
\begin{document}

\title[Almost sure bounds of iterates for DNLW]{Almost sure boundedness of iterates for derivative nonlinear wave equations}
\author[S. Chanillo]{Sagun Chanillo}
\address{Department of Mathematics, Rutgers University
Piscataway, NJ 08854-8019, USA}
\email{chanillo@math.rutgers.edu}
\thanks{S.C. is funded in part by NSF DMS-1201474.}

\author[M. Czubak]{Magdalena Czubak}
\address{Department of Mathematics, University of Colorado at Boulder,
 Campus Box 395
Boulder, CO 80309-0395
USA}
\email{czubak@colorado.edu}
\thanks{M.C. is funded in part by the Simons Foundation \#246255.}

\author[D. Mendelson]{Dana Mendelson}
\address{ 
Department of Mathematics, University of Chicago, 5734 S. University Avenue, Chicago, IL  60637 }
\email{dana@math.uchicago.edu}

\thanks{D.M. was funded in part by NSF DMS-1128155 during the completion of this work.}

\author[A. Nahmod]{Andrea Nahmod}
\address{Department of Mathematics, 
University of Massachusetts Amherst, 
  710 North Pleasant St. 
Lederle Graduate Research Tower
Amherst, MA 01003-9305, USA}
\email{nahmod@math.umass.edu}
\thanks{A.N. is funded in part by NSF DMS-1201443 and DMS-1463714.}

\author[G. Staffilani]{Gigliola Staffilani}
\address{Department of Mathematics, 
Massachusetts Institue of Technology,  
77 Massachusetts Ave, Room 2-251, 
Cambridge, MA 02139-4307, 
USA}
\email{gigliola@math.mit.edu}
\thanks{ G.S. is funded in part by NSF DMS-1362509, DMS-1462401, John Simon Guggenheim Foundation, and the Simons Foundation.}

\begin{abstract}
We study nonlinear wave equations on  $\R^{2+1}$ with quadratic derivative nonlinearities, which include in particular nonlinearities exhibiting a null form structure, with random initial data in $H_x^1\times L^2_x$. In contrast to the counterexamples of Zhou \cite{Zhou} and Foschi-Klainerman \cite{FK}, we obtain a uniform time interval $I$ on which the Picard iterates of all orders are almost surely bounded in $C_t(I ; \dot H_x^1)$.

\end{abstract}
\subjclass[2010]{76D05, 76D03;}
\keywords{almost sure well-posedness, wave equations, null form}
\maketitle

\section{Introduction}

We consider systems of derivative nonlinear wave equations given by
 \begin{align}
\label{e1_main}
 \Box u^I=\sum_{J, K} \partial u^J \partial u^K,
 \end{align}
where we use $\partial$ to denote a first order derivative, i.e. $\partial \in \{\partial_t, \partial_{x_1}, \partial_{x_2} \}$. In particular, this class of equations includes the so-called null-form derivative nonlinear wave equations
 \begin{align}\label{e1}
 \Box u^I=\sum_{J, K} Q(u^J, u^K),
 \end{align}
where $\Box=-\partial_t^2 +\Delta$,  \, $(u^I): \R^2\times \R \rightarrow \R^m$, $m\in {2, 3, \dots}$ and $Q$ is a bilinear form given by
\begin{align*}\label{null_forms0}
&Q_0(f, g) =\partial_t f\partial_t g-\nabla f\cdot \nabla g, \\
&Q_{0j}(f, g) =\partial_t f\partial_j g-\partial_j f \partial_t g, \\
&Q_{ij}(f, g) =\partial_i f\partial_j g-\partial_j f \partial_i g, 
\end{align*}
where $\partial_j$ stands for the spatial derivatives and $\nabla$ for the spatial gradient.  

We now give some history of the wave equations with null forms as well as of the more general derivative nonlinear wave equations.   The null forms were introduced by Klainerman \cite{Klainerman83, Klainerman84} (see also Christodoulou \cite{Christodoulou86}) and nonlinearities exhibiting quadratic derivatives with such null form structure  appear in many physical models and geometric wave equations, such as wave maps, the Maxwell-Klein-Gordon system, the Yang-Mills equations, the space-time Monopole equation, and Ward wave maps.

Wave maps are one of the simplest geometric wave equations and can be viewed as the Minkowski's analogue of a harmonic map. We refer the reader, for example to \cite{Tao_WMII, SS_WM, NSU, Struwe,  Krieger_H2,  RodnianskiSterbenz, SterbenzTataru_WM2, KriegerSchlag, CKLS2}  for some of the pioneering works.  The equations are particularly interesting in the two dimensional case which is energy critical. 

 The Maxwell-Klein-Gordon system can be viewed as an abelian analogue of Yang-Mills. Fundamental contributions to the mathematical analysis of the Yang-Mills gauge theory were made by Uhlenbeck \cite{Uhlenbeck_YM, Uhlenbeck_Lp},  see also the book of Freed and Uhlenbeck \cite{FreedUhl} for further discussion. We also refer the reader to  \cite{KMYM, KL_MKG, KrSchTa,  KST, OT_MKG, KT_YM, MoncriefMKG, CP, Pecher_MKGtemp} and references therein for more on the Maxwell-Klein-Gordon and Yang-Mills equations.
 
The Ward wave map and the space-time Monopole equation were introduced by Ward in \cite{Ward88,Ward89}. The first one is an integrable system in $2+1$ dimensions while the second one is a space-time analogue of Bogomolny equations. The Monopole equation can be derived from the anti-self dual Yang-Mills by a dimensional reduction, and the Ward wave map from the Monopole equation by choosing a particular gauge.  For results on the Cauchy problem for Ward wave maps and soliton construction see \cite{DaiTerng, CzubakT, Wu_WWM, GN}.  A broad survey on the space-time Monopole equation is given by Dai, Terng and Uhlenbeck \cite{DCU}, where, in particular, they show global existence and uniqueness up to a gauge transformation for small initial data in $W^{2,1}$ via a scattering transform.  For results on the Cauchy problem for the Monopole equation, see \cite{CzubakME, BournaveasCandy_ME, Tesfahun}.

\medskip
 
Our goal in this note is to study \eqref{e1_main} from a probabilistic point of view. Our motivation for this is two-fold. First, as we will see when we detail the deterministic local well-posedness theory below, in two dimensions there exists a gap in Sobolev space between the scaling invariant space and the optimal local well-posedness for general quadratic derivative nonlinearities \eqref{e1_main}, including null-form nonlinear wave equations with $Q_{ij}$ and $Q_{0j}$ nonlinearities.  In this work we make progress in bridging this gap.  A second reason we have for studying \eqref{e1_main}  is that it serves as a model problem for  \eqref{e1} and the geometric flows described above, and hence this note fits into a broader program of adapting probabilistic techniques to geometric equations. In many of these cases, however, one would need to define a new randomization procedure which respects the geometry of the target manifold, is compatible with gauge transformations and pull-backs, and which still yields (almost surely) better properties for the solutions. This is largely uncharted territory and represents an exciting future direction of research which will require many new ideas.

\medskip
We review the well-posedness results for \eqref{e1_main} and \eqref{e1}. Schematically, we compare these to the following derivative nonlinear wave equation:
\be\label{e2}
\Box u=(\partial u)^2,
\ee
where $u:\R^{d+1}\rightarrow \R$, and $\partial$ is any of the derivatives $\partial_\alpha$,\, $\alpha=0,\dots, d$.  Solutions to \eqref{e2} are invariant under the scaling transformation
\[
u \mapsto u_{\lambda}(t, x)=u(\lambda t, \lambda x)
\]
and the scaling critical regularity, $s_c=\frac d2$, is by definition the regularity such that the corresponding homogeneous Sobolev space $\dot H^{s_c}(\R^d)$ is invariant under the above scaling.  In particular, in dimension $d = 2$, one has $s_c = 1$, and hence \eqref{e2} is energy critical. Energy methods yield local well-posedness for $s>\frac d2+1$, and using Strichartz  estimates (see \cite{PonceSideris}) one may improve this to  $s> \max (\frac d2, \frac{d+5}{4})$, which is sharp for \eqref{e2} with data in the Sobolev class in light of the counterexamples of Lindblad \cite{Lindblad96}. Nonetheless, this leaves a gap in dimensions $d=2,3,4$ between the optimal regularity in Sobolev spaces and the scaling prediction. In dimension $d=3$, this gap was closed for data in the Fourier-Lebesgue class  \cite{Grunrock}. See also \cite{DFS_10, Tanguay, GrigoryanTanguay} for improved estimates in 2D.

By replacing the general product $(\partial u)^2$ by one of the null forms, Klainerman and Machedon demonstrated in \cite{KlainermanMachedon93} that one may lower the regularity required for local well-posedness from $s>2$ to $s=2$ for either $Q_0$, $Q_{0j}$ or $Q_{ij}$ in dimension $d=3$. Subsequently,  it was shown in \cite{KlainermanMachedon95, KS2, KS} that  in the particular case of \eqref{e1} with a $Q_0$ nonlinearity local well-posedness holds in $H^s(\R^d) \times H^{s-1}(\R^d)$ with $s>\frac{d}{2}$ and $d \geq 2$, which is almost optimal.  Similarly, for the $Q_{0j}$ and $Q_{ij}$ null form nonlinearities,  almost optimal local well-posedness can also be achieved in Sobolev spaces with index $s>\frac{d}{2}$ but in dimensions $d\geq 3$ \cite{KM3} .
 
In the energy critical setting $d=2$, the situation is more delicate for the $Q_{0j}$ and $Q_{ij}$ null form nonlinearities. The best known result in Sobolev spaces was achieved by Zhou \cite{Zhou}, who proved local well-posedness for $s>\frac 54$, which we note is still $\frac 14$ away from the scaling critical regularity. Examination of the first iterate shows that in  Sobolev spaces, this is optimal using iteration methods, see \cite{Zhou, KS, FK} for further discussion. This gap in dimension $d=2$ was closed  for data in the Fourier-Lebesgue class  in \cite{GN}.

\medskip
As mentioned above, our aim is to close this gap between the local well-posedness theory and the scaling critical regularity in Sobolev spaces with the aid of probabilistic methods. The use of probabilistic techniques to study the well-posedness of nonlinear dispersive equations was initiated by Bourgain motivated by the question of the invariance of associated Gibbs measures{\footnote{ after the works by Lebowitz, Rose and Speer \cite{LRS}, by Glimm and Jaffe \cite{GlimmJaffe} and by Zhidkov \cite{Zh1, Zh2}}. In \cite{B94, B96} Bourgain studied the periodic nonlinear Schr\"odinger equation in one and two space dimensions, in \cite{BGP} the Gross-Pitaevskii equation and in \cite{B94Z} the Zakharov system. In particular for the periodic cubic nonlinear Schr\"odinger equation in two dimensions, Bourgain constructed, for the first time, sets of supercritical initial data of close to full measure which give rise to local in time solutions, and then proved almost sure global well-posedness via the invariance of the Gibbs measure.
   
Following the seminal contributions of Bourgain, extensive progress has been made in recent years in the study of random data well-posedness of nonlinear dispersive, wave and fluid equations, both in compact and non-compact settings.  In the context of nonlinear wave equations, and after work by Zhidkov \cite{Zh2},  Burq and Tzvetkov considered in  \cite{BT1, BT2} the cubic nonlinear wave equation on a three-dimensional compact manifold. As in the two dimensional result of Bourgain, Burq and Tzvetkov also constructed sets of supercritical initial data of close to full measure which give rise to local solutions, and subsequently proved the almost sure existence of global solutions for the radial problem via the invariance of the associated Gibbs measure. For other probabilistic results on nonlinear wave equations see for instance \cite{BT3, BT4, Suzzoni1, Suzzoni2, LM1, LM2, Pocovnicu, OP, DLuM1} and references therein. 

As the randomization of the initial data does not regularize in Sobolev spaces, the free evolution of the random initial data is almost surely no more regular than the function which was randomized. Consequently, the typical scheme employed for the random data local well-posedness theory in previous works on nonlinear wave and dispersive equations with power-type nonlinearities has involved re-centering the flow and proving a fixed point argument for the nonlinear component of the solution. Carrying out this method requires one to gain regularity for the nonlinear component of the solution, and it is not clear if this can be done in the current setting of a quadratic derivative nonlinearity. We also note that it is sometimes possible to further re-center the initial data in cases where regularity is only gained for certain components of the nonlinearity, see for instance the recent work of  \cite{BOP}.

 \subsection*{Overview of the main results} We state the main theorem precisely in  Section \ref{sec:set-up}.  Here we just give the main idea.  We study a scalar version of equation  \eqref{e1_main} given by
\be\label{e3}
\Box u=(\partial u)^2.
\ee
We consider a pair of functions $(\phi_0, \phi_1) \in H^1(\R^2) \times L^2(\R^2)$, and we randomize them according to a unit-scale projection in frequency space. See Section \ref{sec:set-up} for the precise definition of the randomization.  We then consider the Picard iterates for \eqref{e3}, and show that the $n$-th iterate, $u^{(n)}(x,t)$, is almost surely bounded, that is, there exists $T$ sufficiently small such that with probability one, we have
\[
u^{(n)}(x,t) \in C_t\bigl([0,T]; \dot H^1(\R^d)\bigr),
\]
for every $n \geq 0$;  see Theorem \ref{main} for the precise statement. The analogous result can be also obtained for vector-valued equations.  See Remark \ref{vectorremark}.  

This result should be held in contrast to the deterministic results of Zhou which indicate that for initial data $\varphi \in  H^s(\R^2)$, there is no estimate for the first iterate for $s \leq \frac{5}{4}$.   More precisely, in \cite[Proposition 5]{Zhou} Zhou proves that given functions $f$ and $g$ which lie in $H^{s+1}(\mathbb{R}^2)$ for $0 \leq s \leq \frac{1}{4}$, if we consider
\[
F = \cos (t|\nabla|)f, \quad  G = \cos(t |\nabla|)g
\]
then the solution $\psi$ to
\begin{align}
\Box \psi &= Q_{12}\bigl(F, G \bigr),\\
(\psi, \psi_t)\big|_{t=0} &= (0,0),
\end{align}
fails to be in $H^{s+1}(\mathbb{R}^2)$, and in particular, the estimate
\[
\|\psi(t, \cdot)\|_{H^{s+1}}+ \|\partial_t \psi(t, \cdot)\|_{H^s}  \leq C(t) \|f\|_{H^{s+1}} \|g\|_{H^{s+1}}
\]
fails. 

\subsection*{Notation and conventions}
We denote by $C > 0$ an absolute constant that depends only on fixed parameters and whose value may change from line to line. We write $X \lesssim Y$ if $X \leq C Y$ for some $C > 0$, and analogously for $X \gtrsim Y$. Finally, we will use $\mathcal{F} f$ or  $\widehat{f}$ to denote the Fourier transform of a function $f$.

\subsection*{Organization of paper}
In Section \ref{sec:set-up}, we introduce our framework and give a precise statement of the main result, Theorem \ref{main}. In Section \ref{sec:prelim}, we collect some  probabilistic facts which we will need in our analysis. In Section \ref{sec:scheme}, we set up the iteration scheme. In Section \ref{sec:prob_bds}, we establish the main probabilistic bounds for the iterates, and finally, in Section \ref{sec:proof_main}, we prove the main theorem, Theorem \ref{main}.

\section{Set-up and statement of main results}\label{sec:set-up}
We introduce the randomization procedure by which we construct the initial data. We will present the randomization for real-valued initial data which readily generalizes to targets $\R^m$ by randomizing each coordinate according to the procedure described below.

To define the randomized initial data on Euclidean space, we let $\psi \in C_c^{\infty}(\R^2)$ be a non-negative function with $\text{supp} (\psi) \subset B(0,1)$ and such that 
\[
 \sum_{k \in \Z^2} \psi(\xi - k) = 1 \text{ for all } \xi \in \R^2.
\]
For every $k \in \Z^2$, we define the function $P_k f: \R^2 \rightarrow \bC$ for any $f \in H^{s}(\R^2)$  by
\begin{equation*}
 (P_k f)(x) = \mathcal{F}^{-1} \left( \psi(\xi - k) \hat{f}(\xi) \right)(x) \text{ for } x \in \R^2.
\end{equation*} 
 We set $\phi = (\phi_0, \phi_1) \in H^1(\R^2) \times L^2(\R^2)$ and we let $\{ (\varepsilon_k, \nu_k) \}_{k \in \Z^2}$  be a sequence of identically distributed independent Rademacher random variables on a probability space $(\Omega, {\mathcal A}, \bP)$. We recall that by definition these are centered random variables which take values $\pm1$ with equal probability. Define
\be\label{initial_eucl}
\phi^\omega =\bigl( \phi_0^\omega(x), \phi_1^\omega(x)\bigr) :=  \biggl(\sum_{k\in \Z^2} \varepsilon_k(\omega) P_k\phi_0(x) ,  \sum_{k\in \Z^2} \nu_k(\omega) P_k \phi_1(x) \biggr),
\ee
where this quantity is understood as a Cauchy limit in $L^2(\Omega; H^{1}(\R^2) \times L^2(\R^2))$. Similar randomizations have previously been used in \cite{ZF}, \cite{LM1}, \cite{BOP1}. 
Crucially, such a randomization does not regularize at the level of Sobolev spaces, see \cite{BT1}.

\subsection{Statement of main results}
We denote the free wave evolution of the initial data $\phi^\omega$ by
\begin{align} \label{equ:free_evolution}
W(t)  \phi^\o = \cos(t|\nabla|) \phi_0^{\o} + \frac{\sin(t|\nabla|)}{|\nabla|} \phi_1^{\o}.
\end{align}

Since the randomization does not regularize in Sobolev spaces, the free evolution of the random initial data \eqref{equ:free_evolution} remains at regularity $H^{1}(\R^2) \times L^2(\R^2)$. Typically, when studying such problem, one writes solutions as
\[
(u(t), \partial_t u(t) ) = \bigl(W(t)  \phi^\o, \partial_t W(t)  \phi^\o\bigr) +(w(t), \partial_t w(t)),
\]
and proves that the nonlinear component $(w(t), \partial_t w(t))$ is almost surely smoother than the random initial data (this dates back to \cite{B96}). In studying \eqref{e3},  due to the derivative structure of the nonlinearity, it is not clear that the solution lies in a smoother space. We instead turn to the Picard iterates. We define 
\begin{align} \label{equ:zeroth_iterate}
u^{(0)}(x,t) = W(t) (\phi_0^\omega, \phi_1^\omega)
\end{align}
and, setting the notation
\begin{align}
A_0(\cdot, \cdot) = \int_0^t \frac{\partial \sin((t - t') |\nabla|)}{|\nabla|} (\cdot) (\cdot) dt',
\end{align}
we define the $n$-th iterate inductively as
\begin{align} \label{nth}
\partial u^{(n)} = \partial u^{(0)} + A_0(\partial u^{(n-1)}, \partial u^{(n-1)}), \quad \partial u^{(0)} = \sum_{k} \varepsilon_k(\omega) P_k \partial  W(t) \phi =: \sum_{k} \varepsilon_k(\omega) F_k.
\end{align}
Our goal is to prove that almost surely, the iterates for \eqref{e3} are bounded.  We are now prepared to state our main theorem.
\begin{theorem}\label{main}
Let $(\Omega, {\mathcal A}, \bP)$ be a probability space.  Let $(\phi_0, \phi_1) \in H^1(\R^2) \times L^2(\R^2)$ and let $\phi^\omega$ be as given in \eqref{initial_eucl}. Let $u^{(0)}$ be the free evolution of $\phi^\omega$ defined in \eqref{equ:zeroth_iterate} and let the $n$-th Picard iterate be as defined in \eqref{nth}. Then for any $T > 0$ sufficiently small, there exists $\Sigma_T \subseteq \Omega$, with $\mathbb P(\Sigma_T)=1$, such that for every $\omega \in \Sigma_T$ and $n\geq 0$, 
\[
\bigl(u^{(n)}, \partial_t u^{(n)}\bigr) \in C_t \bigl([0,T]; \dot H^1(\R^2) \times L^2(\R^2) \bigr).
\]
\end{theorem}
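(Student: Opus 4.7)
The plan is to prove the theorem by a pointwise-in-$\omega$ induction on $n$, where the inductive hypothesis is an $L^p(\Omega)$ bound on $\|\partial u^{(n)}\|_{L^\infty_t([0,T]; L^2_x)}$ together with $\|\partial_t u^{(n)}\|_{L^\infty_t L^2_x}$; once one has such an $L^p(\Omega)$ bound for each fixed $n$, Markov's inequality gives almost sure finiteness and a countable intersection over $n$ yields the simultaneous statement. Continuity in time on the event of finiteness follows from dominated convergence once the bound is in hand. The heart of the matter is therefore to obtain, for every fixed $n$, a finite bound for $\|\partial u^{(n)}\|_{L^p(\Omega; L^\infty_t L^2_x)}$.

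To do so, first unfold the recursion \eqref{nth}: $\partial u^{(n)}$ is a finite sum of at most $2^n$ multilinear expressions indexed by binary trees $\cT$ of depth $\le n$, of the schematic form
\[
X_\cT(t,x) \; = \; \sum_{k_1,\dots,k_m \in \Z^2} \ve_{k_1}(\o)\cdots \ve_{k_m}(\o)\, \cI_\cT\bigl(F_{k_1},\dots,F_{k_m}\bigr)(t,x),
\]
where $m$ is the number of leaves of $\cT$, each $F_k = P_k \partial W(t)\phi$ is deterministic and frequency localized in $B(k,1)$, and $\cI_\cT$ denotes the nested Duhamel/integration operator prescribed by the shape of $\cT$ (each internal node corresponds to one application of $A_0$). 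Thus every $X_\cT$ lies in the Rademacher chaos of order $\le m$. For each $\o$ the total iterate is a finite sum of such $X_\cT$'s, so it suffices to bound each one individually.

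The probabilistic reduction then proceeds in two steps. First, by the Bonami hypercontractive inequality applied pointwise in $(t,x)$ and then integrated, for every $p\ge 2$,
\[
\|X_\cT(t,x)\|_{L^p(\Omega)} \;\le\; (p-1)^{m/2}\,\|X_\cT(t,x)\|_{L^2(\Omega)}.
\]
Taking $L^p(\Omega; L^\infty_t L^2_x)$ norms and using Minkowski with $p$ large reduces everything to $L^2(\Omega)$ estimates. Second, by the independence and Rademacher moment identity $\bE[\ve_{k_1}\cdots\ve_{k_{2r}}] = 1$ only when the multiset of indices pairs up perfectly, and vanishes otherwise, one obtains
\[
\bE |X_\cT(t,x)|^2 \;\lesssim_m\; \sum_{k_1,\dots,k_m} \bigl|\cI_\cT(F_{k_1},\dots,F_{k_m})(t,x)\bigr|^2,
\]
i.e.\ a square function over the leaves. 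This is the key gain from the randomization.

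It remains to show, deterministically, that the square function above is in $L^\infty_t L^2_x([0,T]\times \R^2)$ for $T$ small. I would reduce it by Cauchy--Schwarz in the tree structure to iterated bilinear expressions of the form $\sum_{k_1,k_2}\|A_0(F_{k_1},F_{k_2})(t)\|_{L^2_x}^2$, and then invoke the crucial bilinear estimate: for $F_{k_i}$ frequency localized in $B(k_i,1)$,
\[
\Bigl(\sum_{k_1,k_2}\|A_0(F_{k_1},F_{k_2})\|_{L^\infty_t L^2_x}^2\Bigr)^{1/2} \;\le\; C\, T^\alpha\, \|\phi\|_{H^1\times L^2}^2,
\]
for some $\alpha>0$. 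Such an estimate is available because the unit-scale frequency localization avoids the logarithmic divergence that obstructs deterministic $H^1$ well-posedness: each $F_k$ essentially propagates like a wave packet of unit frequency and the Duhamel integral of a product of two unit-scale free waves gains a power of $T$ through standard Strichartz / energy estimates combined with the disjointness of Fourier supports in the outer sum. Iterating this estimate up the tree (each internal node of $\cT$ gives a factor of $T^\alpha$ and multiplies $L^2$ norms), one finds $\|X_\cT\|_{L^2(\Omega; L^\infty_t L^2_x)} \le C^m T^{c(m)} \|\phi\|_{H^1\times L^2}^m$, a quantity that is finite for every fixed $m$ (and $n$). Summing over the finitely many trees of depth $\le n$ closes the induction. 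The anticipated main obstacle is precisely step three: proving the bilinear $\ell^2$-summed estimate on $A_0(F_{k_1},F_{k_2})$ without any null structure, where one must carefully exploit both the unit-scale Fourier localization and the temporal smallness of $T$ to defeat the obstruction identified by Zhou.
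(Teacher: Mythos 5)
Your overall architecture coincides with the paper's: unpack the Picard recursion completely into tree-indexed multilinear expressions in the unit-scale pieces $F_k$ with products of Rademacher variables as coefficients, decouple probabilistically, bound the deterministic kernels using the unit-scale frequency localization and a power of $T$ from each Duhamel integration, and conclude by almost sure finiteness for each fixed $n$ plus a countable intersection. The genuine gap is in your key probabilistic step. The claimed second-moment bound $\bE|X_\cT(t,x)|^2\lesssim_m\sum_{k_1,\dots,k_m}\bigl|\cI_\cT(F_{k_1},\dots,F_{k_m})(t,x)\bigr|^2$ is false as stated: $\bE[\varepsilon_{k_1}\cdots\varepsilon_{k_m}\varepsilon_{k'_1}\cdots\varepsilon_{k'_m}]=1$ not only when $\vec k'$ is a permutation of $\vec k$, but whenever each index occurs an even number of times in the combined list. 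Since $\varepsilon_k^2=1$, repeated indices inside a single $\vec k$ drop the term into lower-order chaos and produce diagonal contributions; already for $m=2$ the configurations $\vec k=(a,a)$, $\vec k'=(b,b)$ contribute $\bigl(\sum_a c_{aa}\bigr)^2$, which is not controlled by $C_m\sum_{\vec k}|c_{\vec k}|^2$ (take $c_{aa}=1/a$). Handling exactly this is the content of Section \ref{sec:prob_bds} of the paper: the sum is grouped by the number $r$ of distinct indices, the partition identity \eqref{partition} (following Erd\H{o}s--Yau) restores independence so that Lemma \ref{blemma} applies in the odd-power slots, the even-power slots are summed by the triangle inequality, and the resulting $\ell^1$-type diagonal sums are finite only because of the product structure of the coefficients, since $\sum_k\|P_k\phi_0\|_{\dot H^1}^{\alpha}\le\|\phi_0\|_{\dot H^1}^{\alpha}$ for $\alpha\ge 2$. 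Your argument is repairable along these lines, but this bookkeeping is the substance of the proof (it is what produces the $r^r$ and $j!$ losses in Proposition \ref{prop:iterate_strichartz_bds}, harmless only because each iterate is a finite sum over $j\le 2^n$), and it is missing from your proposal.

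Two further points. First, your reduction ``taking $L^p(\Omega;L^\infty_tL^2_x)$ norms and using Minkowski'' is not legitimate: Minkowski moves $L^p_\omega$ inside only past exponents that are at most $p$, so $L^\infty_t$ blocks the interchange. The paper avoids taking moments of $L^\infty_t$ norms of random sums altogether: it runs the deterministic energy inequality, so the $L^\infty_t(\dot H^1\times L^2)$ norm of the $n$-th iterate is controlled by $\|\partial u^{(n-1)}\|_{L^2_tL^4_x}^2$, and all probabilistic estimates are taken only in $L^2_tL^4_x$ with $p\ge 4$; you should adopt this reduction or work with large finite time exponents. Second, the deterministic bilinear input you single out as the main obstacle is in fact elementary at unit scale and requires neither Strichartz machinery nor any null structure: by Lemma \ref{lem:mult_bds}, H\"older and the unit-scale Bernstein inequality of Lemma \ref{bernstein}, one has $\|A_0(F_{k_1},F_{k_2})\|_{L^\infty_tL^2_x}\lesssim T\,\|F_{k_1}\|_{L^\infty_tL^2_x}\|F_{k_2}\|_{L^\infty_tL^2_x}$, and iterating this up the tree is exactly Proposition \ref{prop:g_bds2}, with the time integrations quantified by Lemma \ref{time_decay}; the summability over the indices $k_i$ then comes from the probabilistic decoupling, not from a square-summed bilinear estimate.
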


\begin{remark}\label{vectorremark}
Although we prove this for scalar valued equations, the results generalize to vector-valued equations by considering the terms component-wise.
For general initial data with an $m$-dimensional target, we consider a probability space $(\Omega, \cA, \bP)$ and we let $\{ (\eps_k, \nu_k) \}_{k \in \Z^2}$ be a sequence of \textit{$m$-dimensional} Rademacher random variables. We then randomize the initial data with multi-dimensional target by randomizing each coordinate separately.  In particular, Theorem \ref{main} yields the same results for systems of derivative nonlinear wave equations, as in \eqref{e1_main}.
\end{remark}

 Our proof relies on writing the $n$-th iterate in terms of not only the previous iterate, but on \emph{un-packing} it all the way to the free solution.  Because the nonlinearity is quadratic, this results in $2^n$ many unit-sized projections of the free solutions interacting together at the $n$-th iterate level.  By expanding the iterates completely, the randomization will then enable us to use the Bernstein inequality of Lemma \ref{bernstein} below to take advantage of the support properties of each term. We will also show that the quadratic structure  naturally gives rise to a connection with binary trees. 
 
\begin{remark}
An almost identical proof will yield the result in Theorem \ref{main} for dimensions three and four, the only modification being the factor appearing in Proposition \ref{prop:g_bds2} arising from Bernstein's inequality, see Remark \ref{bern_dim}.
\end{remark}

\section*{Acknowledgements}
 The authors thank the MSRI, the IHES and the Radcliffe Institute for Advanced Study at Harvard University for the kind hospitality that allowed us to develop this project.  D.M. gratefully acknowledges support from the Institute for Advanced Study
at Princeton, and would also like to thank Carlos Kenig for helpful conversations.

\section{Preliminaries} \label{sec:prelim}
Here we record some facts which will be of use to us in our analysis. The first is a unit-scale Bernstein estimate for the projection operators.

\begin{lemma}[Bernstein inequality \protect{\cite{Schlag_notes}}]\label{bernstein}
Let $1 \leq q \leq p \leq \infty$. There exists a constant $C_0$, depending only on the dimension $n$, and $p, q$ such that if $f$ is a function with a support of $\hat f$ contained in a measurable set $E$, then
\be
\|  f \|_{L^p_x} \leq C_0 \abs{E}^{\frac 1q-\frac 1p} \|  f\|_{L^q_x}.
\ee
\end{lemma}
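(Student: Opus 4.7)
The plan is to establish the inequality by a classical two-endpoint Fourier-analytic interpolation argument. The underlying mechanism reflects an uncertainty principle: Fourier concentration on a set of measure $|E|$ constrains the spread of $f$ in physical space, and the factor $|E|^{1/q-1/p}$ quantifies the resulting trade-off between $L^p$ norms.

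First I would establish two anchor cases directly. For $(q,p)=(2,\infty)$, Fourier inversion together with Cauchy--Schwarz on $E$ and Plancherel give
\[
|f(x)| = (2\pi)^{-n/2}\left|\int_E e^{ix\cdot\xi}\hat f(\xi)\,d\xi\right| \leq (2\pi)^{-n/2}|E|^{1/2}\|\hat f\|_{L^2} = C|E|^{1/2}\|f\|_{L^2}.
\]
For $(q,p)=(1,2)$, the elementary bound $\|\hat f\|_{L^\infty}\leq (2\pi)^{-n/2}\|f\|_{L^1}$ combined with $\|\hat f\|_{L^2}^2 = \int_E |\hat f|^2 \leq |E|\,\|\hat f\|_{L^\infty}^2$ and Plancherel gives $\|f\|_{L^2}\leq C|E|^{1/2}\|f\|_{L^1}$.

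To derive the full range $1\leq q\leq p\leq \infty$, I would combine these anchors via the log-convexity (H\"older) inequality $\|f\|_{L^p}\leq \|f\|_{L^q}^{q/p}\|f\|_{L^\infty}^{1-q/p}$, which is valid whenever $q\leq p$. The key sub-step is to upgrade the anchors to the bound $\|f\|_{L^\infty}\leq C|E|^{1/q}\|f\|_{L^q}$ for all $1\leq q\leq\infty$: for $q\leq 2$, one chains the two anchor inequalities through $L^2$; for $q\geq 2$, one interpolates between the $L^2\to L^\infty$ anchor (gain $|E|^{1/2}$) and the trivial $L^\infty\to L^\infty$ bound (no gain) by Riesz--Thorin. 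Substituting this into log-convexity yields
\[
\|f\|_{L^p}\leq \|f\|_{L^q}^{q/p}\bigl(C|E|^{1/q}\|f\|_{L^q}\bigr)^{1-q/p} = C^{1-q/p}|E|^{1/q-1/p}\|f\|_{L^q},
\]
which is the claim, with the exponent $1/q-1/p$ of $|E|$ emerging cleanly.

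The main obstacle is really just bookkeeping of the $|E|$-exponents through each interpolation step; there is no deep mathematical difficulty, and once the two anchor inequalities are set up correctly the result follows from standard Fourier-analytic tools. A minor technical point is that the Fourier multiplier $P_E$ is not generally bounded on $L^q$ for $q\neq 2$, so for the Riesz--Thorin step it is cleanest to interpret the interpolation as acting on the subspace of functions with Fourier support in $E$; this does not affect the argument since interpolation of operator norms restricted to a fixed subspace proceeds identically.
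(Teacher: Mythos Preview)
The paper does not prove this lemma; it is stated with a citation to \cite{Schlag_notes} and used as a black box. Your proof is a correct and standard Fourier-analytic argument for the Bernstein inequality with arbitrary measurable Fourier support.

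One small point of clarification: for $1<q<2$, the phrase ``chaining the two anchor inequalities through $L^2$'' literally only delivers the $L^1\to L^\infty$ and $L^2\to L^\infty$ endpoints. To cover the intermediate $q$ you still need an interpolation step --- either Riesz--Thorin between those two endpoints on the subspace of $E$-band-limited functions, or, more directly, Hausdorff--Young $\|\hat f\|_{L^{q'}}\le C\|f\|_{L^q}$ followed by H\"older on $E$:
\[
\|f\|_{L^\infty}\le C\|\hat f\|_{L^1(E)}\le C|E|^{1/q}\|\hat f\|_{L^{q'}}\le C|E|^{1/q}\|f\|_{L^q}.
\]
This is routine and does not affect the validity of your argument.
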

\begin{remark}
We apply this in two situations.  When we have a single function $P_k f$, and when we have a product of $j$ functions $P_{k_{i}}f$ for $i=1, \ldots, j$ and $j\geq 2$.  In both of these cases, we will take $p=4$ and $q=2$.  This leads to
\be
\| P_k f \|_{L^4_x} \leq C_0 \pi^\frac 14\| P_k f\|_{L^2_x},
\ee
and 
\be
\| P_{k_1} f\cdots P_{k_j} f \|_{L^4_x} \leq C_0 \pi^\frac 14 j^\frac 12\| P_k f\|_{L^2_x},
\ee
since the supports are then contained in the ball of radius $1$ and $j$, respectively.
\end{remark}

In the sequel, it will be useful to introduce the notation
\be\label{m01}
M_{01} := \frac{\partial \sin((t_0 - t_1) |\nabla|)}{|\nabla|},
\ee
where once again we mention that $\partial$ can be either a spatial derivative or a time derivative. We will repeatedly make use of the following fact about the multiplier $M_{01}$.

\begin{lemma} \label{lem:mult_bds}
The multiplier $M_{01}$ is bounded on $L_x^2$, with
\begin{align}
\|M_{01} f \|_{L^2_x} \leq \|f\|_{L^2_x}.
\end{align}
\end{lemma}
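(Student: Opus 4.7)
The plan is to prove this by passing to the Fourier side and verifying that the symbol of $M_{01}$ is uniformly bounded by $1$, so that Plancherel's theorem yields the claim immediately.

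First I would split into cases based on the meaning of $\partial$ in the definition of $M_{01}$. If $\partial = \partial_{t_0}$ (a time derivative), then
\[
M_{01} = \partial_{t_0}\left( \frac{\sin((t_0-t_1)|\nabla|)}{|\nabla|}\right) = \cos((t_0-t_1)|\nabla|),
\]
which is a Fourier multiplier with symbol $\cos((t_0-t_1)|\xi|)$. Since $|\cos x| \leq 1$ for all real $x$, this multiplier is bounded on $L^2_x$ with norm at most $1$.

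If instead $\partial = \partial_{x_j}$ for some spatial index $j$, then $M_{01}$ is the Fourier multiplier with symbol
\[
m(\xi) = i\xi_j \,\frac{\sin((t_0-t_1)|\xi|)}{|\xi|}.
\]
Using $|\xi_j| \leq |\xi|$ and $|\sin x| \leq 1$ pointwise, we obtain $|m(\xi)| \leq 1$ for all $\xi \in \R^2$.

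In either case, Plancherel's theorem gives
\[
\|M_{01} f\|_{L^2_x}^2 = \int |m(\xi)|^2 |\widehat{f}(\xi)|^2 \, d\xi \leq \int |\widehat{f}(\xi)|^2 \, d\xi = \|f\|_{L^2_x}^2,
\]
completing the argument. There is no real obstacle here; the whole point is that the denominator $|\nabla|$ is exactly what is needed to cancel the derivative in the numerator on the symbol side, leaving a bounded oscillatory factor.
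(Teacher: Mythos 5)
Your argument is correct and is exactly the standard one: on the Fourier side the symbol of $M_{01}$ is $\cos((t_0-t_1)|\xi|)$ in the time-derivative case and $i\xi_j\sin((t_0-t_1)|\xi|)/|\xi|$ in the spatial case, both bounded in modulus by $1$, so Plancherel gives the bound. The paper states Lemma \ref{lem:mult_bds} without proof precisely because this is the intended routine computation, so your write-up matches the paper's (implicit) approach.
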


\subsection{Large deviation estimates}\label{sec:ldev}
We begin by recalling a large deviation estimate, which goes back to the classical work of Kolmogorov, Paley and Zygmund.

\begin{lemma}[\protect{\cite[Lemma 3.1]{BT1}}] \label{lem:large_deviation_estimate}
 Let   $\{\varepsilon_k\}_{k=1}^{\infty}$ be a sequence of independent identically distributed (iid) Rademacher random variables on a probability space $(\Omega, {\mathcal A}, \bP)$. Then there exists $C > 0$ such that for every $p \geq 2$ and every $\{c_k\}_{k=1}^{\infty} \in \ell^2(\bN; \bC)$, we have
 \begin{equation}\label{bdeterm}
  \Bigl\| \sum_{k=1}^{\infty} c_k \varepsilon_k(\omega) \Bigr\|_{L^p_\omega(\Omega)} \leq C \sqrt{p} \Bigl( \sum_{k=1}^{\infty} |c_k|^2 \Bigr)^{\frac{1}{2}}.
 \end{equation}
As a consequence of Chebychev's inequality, there exists $\alpha > 0$ such that for every $\lambda > 0$ and every sequence $\{c_k\}_{k=1}^{\infty} \in \ell^2(\bN;\bC)$ of complex numbers, 
 \begin{equation*}
  \bP \Bigl( \bigl\{ \omega : \bigl| \sum_{k=1}^{\infty} c_k \varepsilon_k (\omega) \bigr| > \lambda \bigr\} \Bigr) \leq 2 \exp \biggl(- \alpha \frac{\lambda^2}{\sum_k |c_k|^2} \biggr). 
 \end{equation*}
\end{lemma}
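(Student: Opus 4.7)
The plan is the classical Kolmogorov--Paley--Zygmund argument via the moment generating function. Write $S(\omega) = \sum_{k=1}^\infty c_k \varepsilon_k(\omega)$ and set $\sigma^2 := \sum_k |c_k|^2$. By splitting into real and imaginary parts I may assume the $c_k$ are real (at the cost of an innocuous factor of $2$ on the right-hand side), and by a standard truncation/monotone convergence argument I may assume the sum is finite.

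The first step is to establish the subgaussian moment generating function bound
\begin{equation}
 \mathbb{E}\bigl[e^{tS}\bigr] \leq e^{t^2 \sigma^2 / 2} \qquad \text{for all } t \in \mathbb{R}.
\end{equation}
This follows by independence of the $\varepsilon_k$, which gives $\mathbb{E}[e^{tS}] = \prod_k \mathbb{E}[e^{t c_k \varepsilon_k}] = \prod_k \cosh(t c_k)$, together with the elementary power series inequality $\cosh(x) \leq e^{x^2/2}$.

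The second step is to extract the exponential tail bound, giving the second displayed inequality of the lemma. For any $\lambda > 0$ and $t > 0$, Markov's inequality yields
\begin{equation}
 \mathbb{P}(S > \lambda) \leq e^{-t\lambda} \mathbb{E}[e^{tS}] \leq \exp\bigl( t^2 \sigma^2/2 - t\lambda \bigr),
\end{equation}
and optimizing in $t$ by choosing $t = \lambda/\sigma^2$ produces $\mathbb{P}(S > \lambda) \leq \exp(-\lambda^2/(2\sigma^2))$. Running the same argument on $-S$ and summing gives the two-sided bound with $\alpha = 1/2$.

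The third and final step is to convert the tail bound into the $L^p_\omega$ estimate \eqref{bdeterm}. Using the layer-cake representation
\begin{equation}
 \mathbb{E}[|S|^p] = \int_0^\infty p \lambda^{p-1} \mathbb{P}(|S| > \lambda)\, d\lambda \leq \int_0^\infty 2 p \lambda^{p-1} e^{-\lambda^2/(2\sigma^2)}\, d\lambda,
\end{equation}
the substitution $\lambda = \sigma \sqrt{2u}$ identifies the integral as a Gamma function value, yielding $\mathbb{E}[|S|^p] \leq p \cdot 2^{p/2} \sigma^p \Gamma(p/2)$. Stirling's formula gives $\Gamma(p/2)^{1/p} \lesssim \sqrt{p}$ and $p^{1/p} \leq e$, so taking $p$-th roots produces $\|S\|_{L^p_\omega} \leq C\sqrt{p}\, \sigma$ as claimed.

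I do not expect a real obstacle here; the only mild subtlety is bookkeeping the complex-valued coefficients and justifying passage to the infinite sum, both of which are standard. The crucial ingredients---subgaussianity of Rademacher variables and the layer-cake formula---are self-contained and elementary.
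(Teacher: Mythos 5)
Your proof is correct, and the paper itself offers no proof of this lemma: it is quoted from Burq--Tzvetkov \cite[Lemma 3.1]{BT1}, whose argument is essentially the one you give (subgaussian moment generating function via $\cosh x \le e^{x^2/2}$, Chernoff tail bound, and conversion between tails and $L^p_\omega$ moments). The only cosmetic difference is the direction of the last step: the paper's statement presents the tail bound as a consequence of the $L^p$ estimate via Chebyshev's inequality with $p$ optimized, whereas you obtain the tail bound directly from the Chernoff argument and then recover \eqref{bdeterm} by the layer-cake formula and Stirling; both routes are standard and equivalent, and your handling of complex coefficients and the passage to the infinite sum (uniform bounds on partial sums plus Fatou) is the usual bookkeeping, noting only that the real/imaginary splitting degrades the constants $2$ and $\alpha$ in the tail bound, which is harmless since the bound is trivial for small $\lambda$.
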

We will also use the following lemma, which can be viewed as a generalization of estimate \eqref{bdeterm} which allows for nondeterministic coefficients. A version of this lemma appeared in \cite{ErdosYau}, however we include here a proof of this fact for completeness.
\begin{lemma}\label{blemma}
 Let $\{\varepsilon_k\}_{k=1}^{\infty}$ be a sequence of iid Rademacher random variables on a probability space $(\Omega, {\mathcal A}, \bP)$, and let $b_k$ be a sequence of random variables which are independent of the $\{\varepsilon_k\}_{k=1}^{\infty}$. Then there exists $C > 0$ such that for all $p \geq 2$, 
\begin{align}\label{best}
\bigl\|  \sum_{k=1}^\infty   \varepsilon_{k} b_{k}  \bigr\|_{L^p_\omega(\Omega)} &\leq C \sqrt{p} \biggl\| \biggl( \sum_{k=1}^\infty     |b_{k}|^2 \biggr)^{\frac{1}{2}}  \biggr\|_{L^p_\omega(\Omega)}.
\end{align}
\end{lemma}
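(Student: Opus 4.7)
The plan is to prove the bound by conditioning on the $\sigma$-algebra generated by the random coefficients $\{b_k\}$ and then applying the deterministic-coefficient Khintchine inequality from Lemma \ref{lem:large_deviation_estimate} conditionally.

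Concretely, let $\mathcal{B} \subseteq \mathcal{A}$ denote the $\sigma$-algebra generated by $\{b_k\}_{k=1}^{\infty}$. By hypothesis, the Rademacher sequence $\{\varepsilon_k\}$ is independent of $\mathcal{B}$. For $\bP$-a.e.\ fixed realization of the $b_k$'s (i.e.\ after conditioning on $\mathcal{B}$), the quantity $\sum_k \varepsilon_k b_k$ is a Rademacher series with deterministic coefficients $c_k = b_k(\omega)$. Applying Lemma \ref{lem:large_deviation_estimate} to this conditional Rademacher sum yields, for every $p\geq 2$,
\begin{equation}
\bE\Bigl[\,\Bigl|\sum_{k=1}^\infty \varepsilon_k b_k\Bigr|^p \,\Big|\, \mathcal{B}\,\Bigr] \leq C^p p^{p/2}\Bigl(\sum_{k=1}^\infty |b_k|^2\Bigr)^{p/2}
\end{equation}
almost surely, where $C$ is the absolute constant from Lemma \ref{lem:large_deviation_estimate}.

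Next I would take the expectation of both sides and apply the tower property of conditional expectation to obtain
\begin{equation}
\bE\,\Bigl|\sum_{k=1}^\infty \varepsilon_k b_k\Bigr|^p \leq C^p p^{p/2}\,\bE\Bigl(\sum_{k=1}^\infty |b_k|^2\Bigr)^{p/2}.
\end{equation}
Taking $p$-th roots gives precisely the desired estimate \eqref{best}. The truncation to finite sums $\sum_{k=1}^N$, together with a standard monotone convergence / Cauchy-in-$L^p$ argument, handles the passage to the infinite sum; one should first work with finite partial sums to guarantee that everything is well defined and that the conditional Khintchine bound applies in the form stated.

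The one technical point worth being careful about is the measurability framework: one needs that the $\{b_k\}$ and $\{\varepsilon_k\}$ live on a common probability space where the independence assumption makes sense and where the conditional expectation is well defined. This is the only delicate step; otherwise the argument is a direct consequence of Fubini/independence together with the classical Khintchine inequality already recorded in Lemma \ref{lem:large_deviation_estimate}, and there are no genuine obstacles beyond bookkeeping.
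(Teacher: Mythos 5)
Your proof is correct, but it takes a genuinely different route from the paper. You condition on the $\sigma$-algebra generated by $\{b_k\}$ and apply the classical Khintchine-type bound of Lemma \ref{lem:large_deviation_estimate} conditionally (equivalently, Fubini on the product structure coming from independence: for independent families, $\bE[f(\varepsilon, b)\mid \sigma(b)]$ is obtained by integrating out $\varepsilon$ at frozen $b$), and then conclude by the tower property; since Lemma \ref{lem:large_deviation_estimate} is already stated for all $p\geq 2$, you get the full range at once, and the case $\sum_k |b_k|^2=\infty$ is harmless because the bound is then vacuous. The paper instead proves the estimate by a direct moment computation: for even exponents $p=2j$ it expands $\bigl\|\sum_k \varepsilon_k b_k\bigr\|_{L^{2j}_\omega}^{2j}$ using independence and $\bE(\varepsilon_k)=0$, compares the resulting multinomial coefficients with those of $\bigl\|(\sum_k|b_k|^2)^{1/2}\bigr\|_{L^{2j}_\omega}^{2j}$, bounds the ratio via Stirling's formula to obtain the constant $C^{2j}j^j$, and then fills in non-even $p$ by mixed-norm interpolation; this mirrors the Erd\H{o}s--Yau argument the paper cites. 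Your conditioning argument is the more streamlined one given that Lemma \ref{lem:large_deviation_estimate} is already available, while the paper's computation is self-contained at the level of moments and does not invoke a conditional form of Khintchine; both correctly reduce the matter to the independence of the two families, and your remarks about finite truncation and the common probability space are exactly the right bookkeeping.
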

\begin{proof}
 First we observe that by the Monotone Convergence Theorem and the Cauchy criterion for convergence of infinite series, it is enough to prove the estimate for a finite sum.  
 
 Next we will prove the desired inequality for even powers of $p$, i.e., $p=2j\geq 2$. Using the independence and that $\mathbb E( \varepsilon_k) =0$, one can easily see that 
\be\label{best1}
\bigl\|  \sum^N_{k=1}   \varepsilon_{k} b_{k}  \bigr\|_{L^p_\omega(\Omega)}^p=\sum_{2k_1+\cdots + 2k_N=2j} \int \frac{(2j)!}{(2k_1)!\cdots (2k_N)!} \abs{b_{1}}^{2k_1}\cdots \abs{b_{N}}^{2k_N}.
\ee
Expanding the right hand side  of \eqref{best}, we similarly get for $p=2j$ that 
\be\label{best2}
\biggl\| \biggl( \sum_{k=1}^N     |b_{k}|^2 \biggr)^{\frac{1}{2}}  \biggr\|^p_{L^p_\omega(\Omega)}=\sum_{k_1+\cdots + k_N=j} \int \frac{(j)!}{k_1!\cdots k_N!} \abs{b_{1}}^{2k_1}\cdots \abs{b_{N}}^{2k_N}.
\ee
Thus, comparing the right hand side of \eqref{best1} with \eqref{best2} gives us an estimate with constant 
\begin{equation}\label{normconstant}
\max_{j_1, \dots, j_N}  \frac{(2j)!}{j!}  \frac{k_1!\cdots k_N!}{(2k_1)!\cdots (2k_N)!}.
\end{equation}
However
\[
 \frac{k_1!\cdots k_N!}{(2k_1)!\cdots (2k_N)!} \leq 1, 
\]
so an application of the Stirling's formula yields then a constant of
\[
\frac{(2j)!}{j!} = C^{2j} j^j.
\]
Noting that the bound does not depend on $N$, the desired estimate follows in the $p=2j$ case.  

To obtain \eqref{best} for  arbitrary $p \geq 2$, note first that for $p= 2 j +2$ one obtains a similar estimate with $C(2 j +2)^{1/2}$ for the same constant $C$ appearing above.  By interpolation for mixed-norm spaces, see \cite[Section 7, Theorem 2]{BP} one obtains the desired inequality \eqref{best}  for $ 2 j < p < (2j +2)$ with constant $\sqrt{2} C \sqrt{p}$, which yields the desired bound for all $p \geq 2$.
\end{proof}
We will also use the following variant of Lemma 4.5 in \cite{Tz10} to bound the probability of certain subsets of the probability space.

\begin{lemma}\label{prob_est}
Let $F$ be a real valued measurable function on a probability space $(\Omega, \mathcal{A}, \bP)$. Suppose that there exists $\alpha > 0$, $N > 0$, $k\in \bN^*$ and $C > 0$ such that for every $p\geq p_0\geq 1$ one has
\begin{align}
\|F\|_{L^p_\omega(\Omega)} \leq C N^{-\alpha} p^{\frac{k}{2}}.
\end{align}
Then, there exists $C_1$, and $c$ depending on $C$ and $p_0$ such that for $\lambda > 0$
\begin{align}
\bP( \omega \in \Omega : |F(\omega)| > \lambda) =: \bP(E_{\lambda})
 \leq C_1 e^{-c N^{\frac{2\alpha}{k}} \lambda^{\frac{2}{k}}}.
\end{align}
\end{lemma}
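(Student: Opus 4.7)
The plan is a standard moment-method/Chebyshev optimization. First I would note that for any $p \geq p_0$, Chebyshev's inequality (applied to $|F|^p$) together with the hypothesized moment bound gives
\begin{align}
\bP(|F(\o)| > \lambda) \leq \lambda^{-p}\, \|F\|_{L^p_\o(\Omega)}^p \leq \left(\frac{C N^{-\alpha} p^{k/2}}{\lambda}\right)^{\!p}.
\end{align}
The strategy is then to choose $p$ as a function of $\lambda$ and $N$ so that the base of the above power is a fixed small constant, and the exponent $p$ is as large as possible.

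Next I would carry out this optimization. Writing the right-hand side as $\exp\bigl(p\bigl[\log C - \alpha \log N + \tfrac{k}{2}\log p - \log\lambda\bigr]\bigr)$ and differentiating in $p$, the optimal choice satisfies $C N^{-\alpha} p^{k/2}/\lambda = e^{-k/2}$, i.e.
\begin{align}
p = p^\ast(\lambda,N) := e^{-1}\left(\frac{\lambda}{C}\right)^{\!2/k} N^{2\alpha/k}.
\end{align}
Substituting back, one obtains an exponential bound of the form $\bP(|F|>\lambda) \leq \exp\bigl(-c\, N^{2\alpha/k} \lambda^{2/k}\bigr)$ with $c = \tfrac{k}{2e} C^{-2/k}$, which is exactly the conclusion.

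The one subtlety, and the main thing to handle carefully, is the admissibility condition $p \geq p_0$. The optimal $p^\ast$ is large precisely when $\lambda N^\alpha$ is large; for small values of $\lambda$ or $N$ we may have $p^\ast < p_0$, in which case the moment hypothesis is not applicable. In that regime I would simply use the trivial bound $\bP(E_\lambda) \leq 1$ and absorb the resulting constant into $C_1$: choosing $C_1 = \exp\bigl(c\cdot p_0^{k/2}\cdot (\text{const})\bigr)$ large enough ensures the stated inequality holds on the set $\{(\lambda,N) : p^\ast(\lambda,N) < p_0\}$. For $p^\ast \geq p_0$ we set $p = p^\ast$ and the computation above applies directly.

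I do not expect any serious obstacle here — this is the standard Kolmogorov--Paley--Zygmund / sub-Gaussian passage from $L^p$-moment estimates growing like $p^{k/2}$ to tail estimates of sub-Gaussian-to-the-$2/k$ type. The only care required is (i) performing the $p$-optimization cleanly, and (ii) bookkeeping the constants so that the final $C_1$ and $c$ depend only on $C$ and $p_0$ (and the implicit $k$), as claimed in the statement.
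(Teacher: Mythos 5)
Your argument is correct and is essentially the proof the paper has in mind: the lemma is stated without proof, with a citation to Tzvetkov's Lemma 4.5, whose proof is exactly this Chebyshev-plus-optimization-in-$p$ argument (apply Markov to $|F|^p$, pick $p$ so the base equals $e^{-k/2}$, and use the trivial bound $\bP(E_\lambda)\leq 1$ when the optimal $p^\ast$ falls below $p_0$). The only nitpick is the exact form of $C_1$ in the small-$p^\ast$ regime: since $p^\ast<p_0$ forces $c\,N^{2\alpha/k}\lambda^{2/k}<\tfrac{k}{2}p_0$, the natural choice is $C_1\geq e^{kp_0/2}$ rather than the expression you wrote, but any sufficiently large $C_1$ depending on $C$, $p_0$ (and $k$) works.
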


We now state the improved linear estimate for the zeroth iterates.
\begin{proposition}\label{prop:improved_strichartz}
Let $\phi^\omega$ be as defined in \eqref{initial_eucl}, and $u^{(0)}(x,t)$ the zeroth iterate defined in \eqref{equ:zeroth_iterate}. Let $2 \leq q,r < \infty$. Then
\[
\|\partial u^{(0)} \|_{L^q_t L^r_x(I \times \R^2)} < \infty
\]
almost surely.
\end{proposition}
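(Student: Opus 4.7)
The plan is to bound the $p$-th moment $\|\partial u^{(0)}\|_{L^p_\omega L^q_t(I) L^r_x}$ for a single large finite $p$, since its finiteness already forces $\|\partial u^{(0)}\|_{L^q_t L^r_x(I\times\R^2)}<\infty$ for almost every $\omega$ by Chebyshev. Writing
\[
\partial u^{(0)} \;=\; \sum_k \varepsilon_k(\omega)\, F_k, \qquad F_k(t,x) := P_k \partial W(t)\phi,
\]
the argument proceeds by (i) applying the Khintchine-type bound of Lemma \ref{lem:large_deviation_estimate} to the Rademacher sum, (ii) using Minkowski to pull the $\ell^2_k$ sum outside of the spacetime norms, and (iii) using unit-scale Bernstein together with energy conservation to reduce everything to the norm of the initial data.

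For (i), pick any $p \geq \max(q,r)$ and use Minkowski to exchange the probabilistic and spacetime norms,
\[
\|\partial u^{(0)}\|_{L^p_\omega L^q_t L^r_x} \;\leq\; \|\partial u^{(0)}\|_{L^q_t L^r_x L^p_\omega}.
\]
Applying Lemma \ref{lem:large_deviation_estimate} pointwise in $(t,x)$ yields
\[
\Bigl\|\sum_k \varepsilon_k F_k(t,x)\Bigr\|_{L^p_\omega} \;\leq\; C\sqrt{p}\,\Bigl(\sum_k |F_k(t,x)|^2\Bigr)^{1/2}.
\]
For (ii), since both $q\geq 2$ and $r\geq 2$, two successive applications of Minkowski (first swapping $\ell^2_k$ with $L^r_x$, then with $L^q_t$) give
\[
\Bigl\|\bigl(\sum_k |F_k|^2\bigr)^{1/2}\Bigr\|_{L^q_t L^r_x} \;\leq\; \Bigl(\sum_k \|F_k\|_{L^q_t L^r_x}^2\Bigr)^{1/2}.
\]

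For (iii), the function $F_k(t,\cdot)$ has Fourier support in the unit ball $B(k,1)$ (the multiplier $\partial W(t)$ commutes with $P_k$), so the unit-scale Bernstein inequality of Lemma \ref{bernstein} gives $\|F_k(t,\cdot)\|_{L^r_x}\leq C_r \|F_k(t,\cdot)\|_{L^2_x}$ with a constant independent of $k$. Energy conservation for $W(t)$ then yields, uniformly in $t$,
\[
\|F_k(t,\cdot)\|_{L^2_x} \;\leq\; C\bigl(\|\nabla P_k \phi_0\|_{L^2} + \|P_k \phi_1\|_{L^2}\bigr),
\]
and the finite-overlap property of the partition $\{\psi(\cdot-k)\}_{k\in\Z^2}$ gives $\sum_k \bigl(\|\nabla P_k\phi_0\|_{L^2}^2+\|P_k\phi_1\|_{L^2}^2\bigr)\lesssim \|(\phi_0,\phi_1)\|_{H^1\times L^2}^2$. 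Combining the three steps on the finite interval $I$,
\[
\|\partial u^{(0)}\|_{L^p_\omega L^q_t(I) L^r_x} \;\leq\; C_{q,r}\sqrt{p}\,|I|^{1/q}\,\|(\phi_0,\phi_1)\|_{H^1\times L^2} \;<\; \infty,
\]
and the claim follows.

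I do not anticipate a substantive obstacle here: the whole argument is the standard Khintchine--Bernstein--Minkowski bootstrap for free waves with randomized data, and the only wave-specific input is the $L^2$-boundedness of the multiplier $\partial W(t)$, which is merely energy conservation. The one point worth noting is that the unit-scale frequency localization of $F_k$ (inherent in the randomization procedure of \eqref{initial_eucl}) is precisely what allows Bernstein to upgrade $L^2_x$ to $L^r_x$ for all finite $r$ with a $k$-independent constant; this is the source of the gain over the deterministic Strichartz bounds and is the only reason the exponents $q,r$ can be taken arbitrarily large (below $\infty$).
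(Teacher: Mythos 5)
Your proof is correct and follows essentially the same route as the paper: Minkowski plus the Khintchine bound of Lemma \ref{lem:large_deviation_estimate} with $p \geq \max(q,r)$, then H\"older in time, unit-scale Bernstein, and $L^2$-boundedness of the wave multipliers, yielding the bound $C\sqrt{p}\,|I|^{1/q}\|(\phi_0,\phi_1)\|_{H^1\times L^2}$. The only (immaterial) difference is the final step: you conclude almost sure finiteness directly from finiteness of a single $L^p_\omega$ moment, whereas the paper invokes the tail estimate of Lemma \ref{prob_est}.
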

\begin{proof}
We have 
\[
\partial u^{(0)}(t)=\partial W(t) \phi^\omega = \partial \cos(t|\nabla|) \phi_0^\omega + \partial \frac{\sin(t |\nabla|)}{|\nabla|}  \phi_1^\omega.
\]
We only prove the estimate for $\partial e^{\pm it |\nabla|} \phi_0^\omega$ since the other terms, including the term involving $\phi_1^\omega$, follow analogously. Let $p \geq \max(q,r)$. Then by Lemma \ref{lem:large_deviation_estimate} and Minkowski's inequality
\begin{align*}
\| \|\partial e^{\pm it |\nabla|} \phi_0^\omega \|_{L^q_t L^r_x(I \times \R^2)}\|_{L^p_\omega} &\leq \| \|\partial e^{\pm it |\nabla|} \phi_0^\omega \|_{L^p_\omega} \|_{L^q_t L^r_x(I \times \R^2)}\\
& \lesssim \sqrt{p} \biggl( \sum_{k }  \|\partial e^{\pm it |\nabla|} P_k \phi_0 \|_{L^q_t L^r_x(I \times \R^2)}^2 \biggr)^{\frac{1}{2}}.
\end{align*}
We then use H\"older's inequality and the unit-scale Bernstein estimate of Lemma \ref{bernstein} to obtain
\begin{align}
\| \|\partial e^{\pm it |\nabla|} \phi_0^\omega \|_{L^q_t L^r_x(I \times \R^2)}\|_{L^p_\omega} &\lesssim \sqrt{p} |I|^{\frac{1}{q}} \biggl( \sum_{k}  \|\partial  e^{\pm it |\nabla|} P_k \phi_0 \|_{L^\infty_t L^2_x(I \times \R^2)}^2 \biggr)^{\frac{1}{2}}\\
&\lesssim \sqrt{p} |I|^{\frac{1}{q}} \biggl( \sum_{k}  \||\nabla| P_k \phi_0 \|_{L^2_x(\R^2)}^2 \biggr)^{\frac{1}{2}},\\
&\lesssim \sqrt{p} |I|^{\frac{1}{q}}  \norm{\phi_0}_{\dot H^1_x(\R^2)},
\end{align}
and the desired result follows from Lemma \ref{prob_est} by writing
\[
\bigl\{ \|\partial u^{(0)} \|_{L^q_t L^r_x(I \times \R^2)} < \infty \bigr\} = \bigcup_{\ell =1}^\infty \bigl\{ \|\partial u^{(0)} \|_{L^q_t L^r_x(I \times \R^2)} \leq \ell \bigr\}. \qedhere
\]
\end{proof}

\section{The iteration scheme}\label{sec:scheme}
To bound the iterates, we will employ the energy estimates for the wave equation, namely
\begin{align}
\|u^{(n)} \|_{L^\infty_t \dot H^1_x} + \|\partial_t u^{(n)} \|_{L^\infty_t L^2_x}& \lesssim\|u^{(0)}\|_{L^\infty_t \dot H^1_x} + \|\partial_t u^{(0)}\|_{L^\infty_t  L^2_x} +  \|(\partial u^{(n-1)})^2\|_{L_t^1 L^2_x}\\
& = \|u^{(0)}\|_{L^\infty_t \dot H^1_x} + \|\partial_t u^{(0)}\|_{L^\infty_t L^2_x} +  \|\partial u^{(n-1)}\|^2_{L^2_t L^4_x}.
\end{align}
Hence, it suffices to obtain bounds for the term
\[
  \|\partial u^{(n-1)}\|_{L^2_t L^4_x}.
\]
To do this, we will perform an analysis based on a precise representation of the iterates, namely as a sum with products of Rademacher random variables as the coefficients. 

In the sequel, to simplify our expression for the iterate expansions, we take $\phi_1 = 0$ for our initial data. We prove a preliminary version of this representation formula in the next proposition, which we will refine subsequently. In the sequel, we will implicitly regard the indices $k_i \in \mathbb{Z}^2$ as belonging to $\mathbb{N}$ via a fixed bijection.

\begin{proposition}\label{prop:g_rep0}
Let $\{\varepsilon_k\}_{k=1}^\infty$ be the sequence of independent identically distributed Rademacher random variables used in the definition \eqref{initial_eucl}. We have the representation
\[
\partial u^{(n)}  = \sum_{j=1}^{2^n} \sum_{k_1, \ldots, k_j } \varepsilon_{k_1} \ldots \varepsilon_{k_j} G^{(n)}_{k_1, \ldots, k_j},
\]
where $k_i \in \mathbb{N}$, and for any $n \in \mathbb{N}$, and $1 \leq j \leq 2^n$, 
\begin{align}\label{equ:gn_def}
G^{(n)}_{k_{1}} := F_{k_{1}}, \quad G^{(n)}_{k_{1}, \ldots, k_{j}} := \sum_{i \in B_j} A_0\bigl(G^{(n-1)}_{k_{1}, \ldots, k_{i}}, G^{(n-1)}_{k_{i+1}, \ldots, k_{j}}  \bigr), \quad j \geq 2
\end{align}
for $F_{k_1}$ defined in \eqref{nth} and
\[
B_j = \biggl \{ i \,:\, 1 \leq i \leq j-1 \textup{ if }j \leq 2^{n-1},\,\, j - 2^{n-1} \leq i \leq 2^{n-1} \textup{ if } j > 2^{n-1}\biggr\}.
\]
\end{proposition}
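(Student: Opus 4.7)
My plan is to prove this by induction on $n$, using only the bilinearity of $A_0$ together with a careful bookkeeping of how the word of Rademacher factors grows from one iteration to the next. The base case $n = 0$ is immediate from \eqref{nth}: since $2^0 = 1$, only the term $j = 1$ appears, and $G^{(0)}_{k_1} = F_{k_1}$ by the first clause of \eqref{equ:gn_def}, which matches $\partial u^{(0)} = \sum_{k_1} \varepsilon_{k_1} F_{k_1}$.

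For the inductive step, I assume the claimed representation holds at level $n-1$ and apply the Picard recurrence $\partial u^{(n)} = \partial u^{(0)} + A_0(\partial u^{(n-1)}, \partial u^{(n-1)})$, treating the two summands separately. The free term $\partial u^{(0)}$ contributes exactly the $j = 1$ piece $\sum_{k_1}\varepsilon_{k_1} G^{(n)}_{k_1}$. For the Duhamel term I substitute the inductive representation into each of the two copies of $\partial u^{(n-1)}$, using independent families of dummy summation indices for the two copies, and invoke the bilinearity of $A_0$ (immediate from its definition as an integral of a linear Fourier multiplier applied to a product of its two arguments) to pull out the Rademacher coefficients, yielding
\begin{align*}
A_0(\partial u^{(n-1)}, \partial u^{(n-1)}) = \sum_{j_1, j_2 = 1}^{2^{n-1}} \sum_{k_1, \dots, k_{j_1+j_2}} \varepsilon_{k_1}\cdots \varepsilon_{k_{j_1+j_2}}\, A_0\bigl(G^{(n-1)}_{k_1, \dots, k_{j_1}},\, G^{(n-1)}_{k_{j_1+1}, \dots, k_{j_1+j_2}}\bigr).
\end{align*}

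The only step requiring real care is the combinatorial regrouping of this double sum by the total length $j := j_1 + j_2$, which ranges over $\{2, \dots, 2^n\}$. Writing $i := j_1$, the constraints $1 \le j_1, j_2 \le 2^{n-1}$ translate to $i \in \{1, \dots, j-1\}$ when $j \le 2^{n-1}$, and to $i \in \{j - 2^{n-1}, \dots, 2^{n-1}\}$ when $j > 2^{n-1}$, which is precisely the definition of $B_j$. Matching the resulting inner sum over $i$ against the second clause of \eqref{equ:gn_def} identifies it as $G^{(n)}_{k_1, \dots, k_j}$ and closes the induction.

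I do not anticipate any conceptual obstacle; the identity is purely algebraic and does not rely on any independence property of the Rademacher variables. One minor subtlety worth flagging is that the indices $k_1, \dots, k_j$ run independently over $\mathbb{N}$ and may therefore repeat, so the factor $\varepsilon_{k_1}\cdots\varepsilon_{k_j}$ is in general not a product of distinct Rademacher variables. This is harmless at the level of the present formal identity; independence and the associated square-function bound of Lemma \ref{blemma} will be invoked only later, when one estimates $L^p_\omega$ norms of the sums produced by this expansion.
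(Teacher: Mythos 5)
Your proposal is correct and follows essentially the same route as the paper: induction on $n$, substitution of the level-$(n-1)$ representation into both slots of $A_0$, bilinearity to extract the Rademacher coefficients, and regrouping of the double sum by the total number of factors $j=j_1+j_2$, with the constraints $1\le j_1,j_2\le 2^{n-1}$ giving exactly the index set $B_j$. Your closing remark that repeated indices are harmless here and that independence is only used later is consistent with how the paper treats this identity as purely algebraic.
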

\begin{remark}
We point out that $B_j$ is only defined for $j \geq 2$ and it is the collection of indices $i$ for $1 \leq i \leq 2^{n-1}$ which can contribute to the term with $j$ Rademacher random variables in the $n$-th iterate.
\end{remark}
\begin{proof}
The expression holds for $n=0$. Assume next it holds for $n-1$. We then have from the formula \eqref{nth} that
\begin{align}
\partial u^{(n)} = \partial u^{(0)} + \sum_{i =1}^{2^{n-1}} \sum_{j =1}^{2^{n-1}} \sum_{k_1, \ldots, k_i} \sum_{ \ell_1, \ldots, \ell_j} \varepsilon_{k_1} \cdots \varepsilon_{k_i} \varepsilon_{\ell_1} \cdots \varepsilon_{\ell_j}   A_0(G^{(n-1)}_{k_1, \ldots, k_i}, G^{(n-1)}_{\ell_1, \ldots, \ell_j}).
\end{align}

Now, since we are summing over all indices, all terms with $i+j$ many Rademacher random variables may be grouped together since every combination of coefficients appears in front of all of them. Consequently, we group terms with the same number of the Rademacher random variables coefficients, and perform the change of variables $i+j = j$. Then with $B_j$ as above, we obtain the result since $B_j$ contains precisely the indices $i$ which contribute to the $j$-th term.
\end{proof}

Next observe that the expression for $G^{(n)}_{k_1, \ldots, k_j}$ involves $j-1$ time integrations and using \eqref{equ:gn_def} it can be written as
\begin{align}\label{tildeg}
G^{(n)}_{k_1, \ldots, k_j} \sim \underbrace{\int \ldots \int}_{j-1 \textup{ times}} \widetilde{G}^{(n)}_{k_1, \ldots, k_j} ,
\end{align}
where
\begin{equation}\label{equ:g_def}
\begin{split}
&\widetilde{G}^{(0)}_{k_1} :=F_{k_1},\\
&\widetilde{G}^{(n)}_{k_1, \ldots, k_j} := \sum_{i \in B_j} M_{01}\bigl( \widetilde{G}^{(n-1)}_{k_1, \ldots, k_i} \cdot \widetilde{G}^{(n-1)}_{k_{i+1}, \ldots, k_j}  \bigr),\quad n\geq 1,
\end{split}
\end{equation}
with $M_{01}$ as given by \eqref{m01}. This can be seen readily by induction and the formulas above.

  We will now describe the structure of the terms in $G^{(n)}_{k_1, \ldots, k_j}$ more precisely.  We observe that these terms involve time integrals with different iterative structures. In the following proposition, we will establish that these different contributions are in bijection with a collection of full binary trees. We recall a full binary tree is a tree where each node (also called a vertex) has either no children or exactly two children.  If a node does not have a child, then it is called a leaf (or a terminal vertex).  If a node has a child, it is called an internal node. We define the height of a binary tree to be the number of edges between the root and the furthest leaf.
  
In the sequel for a binary tree $\tau$ with $j$ leaves it will be useful to set notation $\tau = \tau_i \cup \tau_{j-i}$ where $\tau_i$ and $\tau_{j-i}$ are the unique trees with $i$ and $j-i$ leaves respectively such that the root of $\tau$ has $\tau_i$ and $\tau_{j-i}$ as left and right children.

\begin{proposition}\label{trees}
Let $n\geq 0$ and $1 \leq j \leq 2^n$.   Let  $\mathcal{T}$ denote the collection of full binary trees.  Then there is an injective map from the terms appearing in $G^{(n)}_{k_1, \ldots, k_j}$, mapping terms in \eqref{equ:gn_def} to trees with $j$ leaves and $j-1$ internal nodes. We denote the image of the map by $\mathcal{T}_{j}$ and we will write
\[
G^{(n)}_{k_1, \ldots, k_j} := \sum_{\tau \in \mathcal{T}_{j}} G^{(n), \tau}_{k_1, \ldots, k_j}.
\]
\end{proposition}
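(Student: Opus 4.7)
My plan is to construct the map explicitly by unfolding the recursion in \eqref{equ:gn_def} and tracking which split $i \in B_j$ is used at each step. The key observation is that a ``term'' in $G^{(n)}_{k_1,\ldots,k_j}$ is, by \eqref{equ:gn_def}, a choice of an index $i \in B_j$ together with a specific term in $G^{(n-1)}_{k_1,\ldots,k_i}$ and a specific term in $G^{(n-1)}_{k_{i+1},\ldots,k_j}$. This binary branching is exactly what a full binary tree encodes, with each application of $A_0$ becoming an internal node and each instance of $F_{k_\ell}$ becoming a leaf.

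I would proceed by strong induction on $n$. The base case is $n=0$, where necessarily $j=1$ and $G^{(0)}_{k_1} = F_{k_1}$ is a single term; map it to the trivial tree consisting of a single leaf (zero internal nodes). For the inductive step, assume the claim holds for $n-1$ and for all $1 \leq j' \leq 2^{n-1}$. Fix $j \geq 2$ and a term in $G^{(n)}_{k_1,\ldots,k_j}$. By \eqref{equ:gn_def} this term is of the form $A_0(g_L, g_R)$ for a unique $i \in B_j$, where $g_L$ is a term of $G^{(n-1)}_{k_1,\ldots,k_i}$ and $g_R$ is a term of $G^{(n-1)}_{k_{i+1},\ldots,k_j}$. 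By the inductive hypothesis, $g_L$ corresponds to a tree $\tau_L$ with $i$ leaves and $i-1$ internal nodes, and $g_R$ to a tree $\tau_R$ with $j-i$ leaves and $j-i-1$ internal nodes. Assign to the original term the tree $\tau := \tau_L \cup \tau_R$ whose root has $\tau_L$ and $\tau_R$ as left and right children. By construction $\tau$ has $i + (j-i) = j$ leaves and $(i-1) + (j-i-1) + 1 = j-1$ internal nodes, as required.

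For injectivity, suppose two terms in $G^{(n)}_{k_1,\ldots,k_j}$ are sent to the same tree $\tau$. Write $\tau = \tau_L \cup \tau_R$ as in the statement of the proposition; then the number of leaves in $\tau_L$ recovers the split index $i \in B_j$ used for both terms, so both terms are of the form $A_0(g_L, g_R)$ and $A_0(g_L', g_R')$ with the same $i$, and the left/right subtrees $\tau_L, \tau_R$ are the images of $g_L, g_L'$ and $g_R, g_R'$ respectively. The inductive hypothesis, applied to $G^{(n-1)}_{k_1,\ldots,k_i}$ and $G^{(n-1)}_{k_{i+1},\ldots,k_j}$, forces $g_L = g_L'$ and $g_R = g_R'$, hence the original terms agree.

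I do not expect any serious obstacle: once one is precise about what ``a term'' means (a specific leaf in the branching-tree of the recursive expansion, \emph{before} any cancellation or identification) the construction is essentially tautological, since the recursion in \eqref{equ:gn_def} is itself a full binary branching. The only item requiring mild care is verifying that the constraints encoded in $B_j$ are compatible with the induction: for $n-1$ the inductive hypothesis only supplies trees whose height is at most $n-1$, which is exactly what the definition of $B_j$ guarantees when $j > 2^{n-1}$ (both subtrees must have at most $2^{n-1}$ leaves). Thus $\cT_j$ is naturally identified with a subfamily of full binary trees of height at most $n$ with $j$ leaves, completing the proof.
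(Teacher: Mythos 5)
Your proposal is correct and follows essentially the same route as the paper: an induction on $n$ in which each application of $A_0$ with split index $i\in B_j$ becomes an internal node joining the trees of the two factors, with injectivity coming from the uniqueness of the decomposition of a full binary tree into its left and right subtrees. Your added detail on recovering $i$ from the leaf count of the left subtree and on compatibility with the constraint $B_j$ is a fine elaboration of the same argument.
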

\begin{remark}
Analogously to \eqref{tildeg}, we will use the notation
\begin{align}
G^{(n), \tau}_{k_1, \ldots, k_j} =  \underbrace{\int \ldots \int}_{j-1 \textup{ times}} \widetilde{G}^{(n), \tau}_{k_1, \ldots, k_j}.
\end{align}
\end{remark}

\begin{remark} Before the proof, we provide some examples.  One example arises from
\[
A_0(\partial u^{(0)}, A_0 (\partial u^{(0)}, \ldots A_0(\partial u^{(0)},\partial u^{(0)}))),
\]
and in this case, we have a contribution of 
\[
G^{(n), \tau}_{k_1, \ldots, k_j} = \underbrace{\int_0^{t_0 = t} \int_0^{t_1} \ldots \int_0^{t_{j-2}}}_{j-1 \textup{ times}} \widetilde{G}^{(n), \tau}_{k_1, \ldots, k_j}  dt_1 \ldots dt_{j-1}.
\]
This maps to a full binary tree with height $j-1$, where each left node is a leaf.  Another example arises from the contribution when $j=2^n$, and
\[
G^{(n), \tau}_{k_1, \ldots, k_j}  = \int_0^{t}  \left( \int_0^{t_1} \ldots \left(   \int_0^{t_{n-2}}  \left( \int_0^{t_{n-1} } \widetilde{G}^{(n), \tau}_{k_1, \ldots, k_j}  dt_{n} \right)^2 dt_{n-1} \right)^2 \ldots dt_2 \right)^2 dt_1.
\]
This maps to a full binary tree with height $n$, and with leaves appearing only at the final level.

\end{remark}
\begin{proof}[Proof of Proposition \ref{trees}]
We define the map inductively. Let $n\geq 0$, and $j=1$, then $G^{(n)}_{k_1}=F_{k_1}$, so there is no integral, and we map this term to a single node. For $j \geq 2$, we place a node whenever there is an appearance of the integral operator $A_0$, with the left and right children of the node corresponding to the images of the left and right terms in the bilinear operator. Working out one more example for $n=1$ and $j=2$, we have
\[
G^{(1)}_{k_{1}, k_{2}} = A_0\bigl(G^{(0)}_{k_{1}}, G^{(0)}_{k_{2}}  \bigr),
\]
which would map to a binary tree with a single internal node, and two leaves. We now proceed with the induction.

Let now $n\geq 1$, and $2\leq j \leq 2^n$, then by definition, we have
\be\label{defnGnagain}
G^{(n)}_{k_{1}, \ldots, k_{j}} = \sum_{i \in B_j} A_0\bigl(G^{(n-1)}_{k_{1}, \ldots, k_{i}}, G^{(n-1)}_{k_{i+1}, \ldots, k_{j}}  \bigr).
\ee
We take the following formula as the inductive hypothesis
\begin{align}
G^{(n)}_{k_1, \ldots, k_j} = \sum_{\tau \in \mathcal{T}_{j}} G^{(n), \tau}_{k_1, \ldots, k_j}.
\end{align}
By \eqref{defnGnagain} and the inductive hypothesis we have
\begin{align}
G^{(n+1)}_{k_{1}, \ldots, k_{j}} &= \sum_{i \in B_j} A_0\bigl(G^{(n)}_{k_{1}, \ldots, k_{i}}, G^{(n)}_{k_{i+1}, \ldots, k_{j}}  \bigr)\\
&=\sum_{i \in B_j} \sum_{\tau_{i} \in \mathcal{T}_{i}}\sum_{\tau_{j-i} \in \mathcal{T}_{j-i}}  A_0 \bigl(G^{(n), \tau_i}_{k_1, \ldots, k_i}, G^{(n), \tau_{j-i}}_{k_{i+1}, \ldots, k_j} \bigr).
\end{align}
Since $A_0$ gives an integral, we get a collection of trees where the left child comes from trees in $\mathcal{T}_{i}$, and the right child comes from trees in  $\mathcal{T}_{j-i}$ giving a tree with $i+ (j-i)=j$ leaves, and $i-1+(j-i-1)=j-1$ internal nodes.  Since the decomposition into two children trees is unique, this map is injective.
\end{proof}

\begin{corollary}
We have the representation
\begin{align}\label{equ:g_term_cor}
G^{(n), \tau}_{k_1, \ldots, k_j} =  \underbrace{\int_0^{t_0 = t} \ldots \int_0^{t_{j-2}}}_{j-1 \textup{ times}} \widetilde{G}^{(n), \tau}_{k_1, \ldots, k_j} dt_1 \ldots d_{t_{j-1}},
\end{align}
with
\be
\begin{split}
\widetilde{G}^{(0),\tau}_{k_1}& :=F_{k_1},\\
\widetilde{G}^{(n), \tau}_{k_1, \ldots, k_j} &:=   M_{01}\bigl( \widetilde{G}^{(n-1), \tau_i}_{k_1, \ldots, k_i} \cdot \widetilde{G}^{(n-1), \tau_{j-i}}_{k_{i+1}, \ldots, k_j}  \bigr),
\end{split}
\ee
where the notation $\tau_i$ and $\tau_{j-i}$ means  $\tau_i \in \mathcal{T}_i$ and $\tau_{j-i} \in \mathcal{T}_{j-i}$ for some trees $\tau_i$ and $\tau_{j-i}$ with $i$ and $j-i$ leaves respectively.

Furthermore, we observe the support of $\widetilde{G}^{(n), \tau}_{k_1, \ldots, k_j}$ is contained in the ball of radius $j$.
\end{corollary}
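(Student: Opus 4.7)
The plan is to verify the two assertions of the corollary separately, both by induction that essentially unwinds notation established earlier.

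For the representation \eqref{equ:g_term_cor}, I would observe that the operator $A_0$ from \eqref{nth} factors as $A_0(\cdot,\cdot) = \int_0^{t_0} M_{01}(\,\cdot\,\cdot\,)\, dt_1$ with $M_{01}$ as in \eqref{m01}. Combining this factorization with the bijection from Proposition \ref{trees}, every term $G^{(n),\tau}_{k_1,\ldots,k_j}$ corresponds to a full binary tree $\tau$ with $j$ leaves, and each of its $j-1$ internal nodes contributes precisely one time integration $\int_0^{t_{\ell-1}}\,dt_\ell$. The integrand $\widetilde{G}^{(n),\tau}_{k_1,\ldots,k_j}$ is then read off by restricting the sum in \eqref{equ:g_def} to the single branching $\tau=\tau_i\cup\tau_{j-i}$ at the root of $\tau$. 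This gives the claimed recursive formula and the identity \eqref{equ:g_term_cor} as a direct unwinding of definitions.

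For the Fourier support statement, I would argue by induction on the number of leaves $j$ (equivalently on the height of $\tau$). The base case $j=1$ is immediate since $\widetilde{G}^{(0),\tau}_{k_1} = F_{k_1} = P_{k_1}\partial W(t)\phi$ has Fourier support contained in the unit ball $B(k_1,1)$, which has radius $1 = j$. For the inductive step, I use the recursion
\[
\widetilde{G}^{(n),\tau}_{k_1,\ldots,k_j} = M_{01}\bigl( \widetilde{G}^{(n-1),\tau_i}_{k_1,\ldots,k_i}\cdot \widetilde{G}^{(n-1),\tau_{j-i}}_{k_{i+1},\ldots,k_j}\bigr).
\]
By the inductive hypothesis the two factors have Fourier support contained in balls of radii $i$ and $j-i$ respectively. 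Since the Fourier transform exchanges pointwise product with convolution, the Fourier support of the product is contained in the Minkowski sum of the two supports, hence in a ball of radius $i+(j-i)=j$. Finally, $M_{01}$ acts as a Fourier multiplier and therefore does not enlarge the Fourier support, so the bound transfers to $\widetilde{G}^{(n),\tau}_{k_1,\ldots,k_j}$ and the induction closes.

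The only genuine obstacle is purely bookkeeping: one must confirm that the splitting $\tau=\tau_i\cup\tau_{j-i}$ at the root of the tree matches exactly the summand indexed by $i\in B_j$ in \eqref{equ:g_def}, so that the injectivity proved in Proposition \ref{trees} truly identifies each tree with a unique integrand. Once this identification is made explicit, neither assertion requires any analytic input; the support estimate is exactly the ingredient needed to invoke the product form of the Bernstein inequality recorded in the remark following Lemma \ref{bernstein}, which will drive the probabilistic estimates in the next section.
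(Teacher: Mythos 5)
Your proposal is correct and follows essentially the route the paper intends: the corollary is stated as an immediate consequence of Proposition \ref{trees} together with the recursion \eqref{equ:g_def} (unwinding $A_0$ into $M_{01}$ plus one time integration per internal node), and the support claim follows from the unit-scale frequency localization of the $F_{k_i}$, the convolution (Minkowski sum) structure of products, and the fact that $M_{01}$ is a Fourier multiplier. Your added care about matching the root splitting $\tau=\tau_i\cup\tau_{j-i}$ with the index $i\in B_j$ is exactly the bookkeeping the paper leaves implicit.
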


 Let $I_\tau(t)$ denote the $j-1$ iterated time integral which arises in \eqref{equ:g_term_cor}, that is, $I_\tau(t)$ is the number which we obtain by replacing $\widetilde{G}^{(n), \tau}_{k_1, \ldots, k_j} $ by $1$ in \eqref{equ:g_term_cor} and carrying out the time integration.

\medskip
\begin{lemma} \label{time_decay}
Let $n \geq 0$ and let $1 \leq j \leq 2^n$. For every $\tau \in \mathcal{T}_j$, we have
\[
I_\tau(t) = \frac{t^{j-1}}{C_{\tau}},
\]
and furthermore, we have the recurrence relation
\be\label{c_alpha_recurr}
\begin{split}
C_{\tau_1} &= 1, \ \ j=1, \\  C_{\tau} &= (j-1)\,C_{\tau_i} C_{\tau_{j-i}}, \,\, \tau \neq \tau_1, \ \ j\geq 2.
\end{split}
\ee
where $\tau_1$ denotes the tree with only one node.
\end{lemma}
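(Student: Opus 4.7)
My plan is to prove this lemma by strong induction on $j$, the number of leaves of $\tau$, exploiting the recursive product structure of $G^{(n),\tau}$ inherited from the tree decomposition $\tau = \tau_i \cup \tau_{j-i}$.

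The base case $j=1$ is trivial: $\tau = \tau_1$ has a single node, no integration occurs in the formula for $G^{(n),\tau_1}_{k_1} = F_{k_1}$, so replacing the integrand by $1$ yields $I_{\tau_1}(t) = 1 = t^0$, matching $C_{\tau_1} = 1$.

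For the inductive step, I would first extract the key structural observation: when $\tau = \tau_i \cup \tau_{j-i}$, the definition $G^{(n),\tau}_{k_1,\ldots,k_j} = A_0\bigl(G^{(n-1),\tau_i}_{k_1,\ldots,k_i},\, G^{(n-1),\tau_{j-i}}_{k_{i+1},\ldots,k_j}\bigr)$ shows that the outermost $A_0$ contributes a single time integral $\int_0^t \, dt_1$, and inside this integral the two sub-trees contribute independent nested integrations on the time variables ranging over $[0,t_1]$. Stripping away the spatial multipliers (i.e.\ replacing the fully expanded integrand $\widetilde{G}^{(n),\tau}$ by $1$), the purely temporal structure therefore factors as
\begin{equation}
I_\tau(t) \;=\; \int_0^t I_{\tau_i}(t_1)\, I_{\tau_{j-i}}(t_1)\,dt_1.
\end{equation}
This factorization is the only non-routine step; I would justify it carefully by unrolling the definition of $\widetilde{G}^{(n),\tau}$ in the corollary and noting that the $j-1$ time integrations arising from the $j-1$ internal nodes of $\tau$ split between the outermost node (one integration over $[0,t]$) and the two subtrees (contributing $i-1$ and $j-i-1$ integrations, respectively, over ranges depending only on $t_1$).

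With this factorization in hand, applying the inductive hypothesis gives $I_{\tau_i}(t_1) = t_1^{i-1}/C_{\tau_i}$ and $I_{\tau_{j-i}}(t_1) = t_1^{j-i-1}/C_{\tau_{j-i}}$, so
\begin{equation}
I_\tau(t) = \frac{1}{C_{\tau_i}\, C_{\tau_{j-i}}} \int_0^t t_1^{\,j-2}\,dt_1 \;=\; \frac{t^{j-1}}{(j-1)\, C_{\tau_i}\, C_{\tau_{j-i}}}.
\end{equation}
Defining $C_\tau := (j-1)\, C_{\tau_i}\, C_{\tau_{j-i}}$ then simultaneously yields the claimed closed form $I_\tau(t) = t^{j-1}/C_\tau$ and the recurrence \eqref{c_alpha_recurr}, completing the induction. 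The one subtlety to be careful about is that, while the decomposition $\tau = \tau_i \cup \tau_{j-i}$ is uniquely determined by $\tau$, the value $i$ depends on $\tau$ (it is the number of leaves in the left subtree of the root); this is what causes different trees with the same number of leaves to have different constants $C_\tau$, as already illustrated by the comparison between the ladder tree (giving $(j-1)!$) and the fully balanced tree.
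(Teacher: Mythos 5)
Your proposal is correct and follows essentially the same route as the paper: both arguments rest on the factorization $I_\tau(t) = \int_0^t I_{\tau_i}(t_1)\, I_{\tau_{j-i}}(t_1)\, dt_1$ coming from the decomposition $\tau = \tau_i \cup \tau_{j-i}$, followed by the inductive hypothesis and a single integration of $t_1^{j-2}$. Your write-up merely spells out in more detail the justification of that factorization, which the paper leaves implicit.
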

\begin{proof}
We argue again by induction. The case $n=0$ is clear, and by definition, for $\tau = \tau_i \cup \tau_{j-i}$ we can represent $I_\tau$ as
\[
I_\tau(t) = \int_{0}^{t} dt_1 I_{\tau_i}(t_1) I_{\tau_{j-i}}(t_1),
\]
and the result follows from the definition of $C_{\tau}$ and integration.
\end{proof}

It will be useful in the sequel to introduce the notation
\begin{align}
C_{\tau, j}^* = \inf_{\tau \in \mathcal{T}_j} C_\tau,
\end{align}
and we note that the upper bound
\begin{align}\label{c_tau_star_upper}
C_{\tau, 2^n}^* \leq \prod_{k=1}^{n} (2^k - 1)^{2^{n-k}}
\end{align}
follows from considering the tree of height $n$.

The next proposition now simply follows from Proposition \ref{trees} and Lemma \ref{time_decay}.
\begin{proposition}\label{prop:g_rep}
Let $n \geq 1$ and $2 \leq j \leq 2^n$. Then
\begin{align}
\|G^{(n)}_{k_1, \ldots, k_j}\|_{L^\infty_t L^4_x} \leq \sum_{\tau \in \mathcal{T}_{j}} \frac{|I|^{j-1}}{C_\tau} \|\widetilde{G}^{(n), \tau}_{k_1, \ldots, k_j}\|_{L^\infty_t L^4_x} .
\end{align}
\end{proposition}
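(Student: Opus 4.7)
The plan is to combine the tree decomposition from Proposition \ref{trees}, the integral representation from the corollary, and the volume computation of the iterated time simplices from Lemma \ref{time_decay}. Each piece is already in place; the proposition just packages them together.

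First, I would apply the triangle inequality to the decomposition
\[
G^{(n)}_{k_1, \ldots, k_j} = \sum_{\tau \in \mathcal{T}_{j}} G^{(n), \tau}_{k_1, \ldots, k_j},
\]
to reduce matters to bounding $\|G^{(n), \tau}_{k_1, \ldots, k_j}\|_{L^\infty_t L^4_x}$ for each individual tree $\tau \in \mathcal{T}_j$. For a fixed $t \in I$, I would then use \eqref{equ:g_term_cor} to write $G^{(n), \tau}_{k_1, \ldots, k_j}(t)$ as a $(j-1)$-fold iterated time integral of $\widetilde{G}^{(n), \tau}_{k_1, \ldots, k_j}$.

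Next, I would take the $L^4_x$ norm and push it inside all the time integrals via Minkowski's inequality, giving
\[
\bigl\|G^{(n), \tau}_{k_1, \ldots, k_j}(t)\bigr\|_{L^4_x} \leq \underbrace{\int_0^{t} \!\cdots\! \int_0^{t_{j-2}}}_{j-1 \textup{ times}} \bigl\|\widetilde{G}^{(n), \tau}_{k_1, \ldots, k_j}(t_1, \ldots, t_{j-1})\bigr\|_{L^4_x} \, dt_{j-1} \cdots dt_1.
\]
Bounding the integrand by its $L^\infty_t L^4_x$ norm (taken over all the time variables that appear in $\widetilde{G}^{(n),\tau}$) then yields
\[
\bigl\|G^{(n), \tau}_{k_1, \ldots, k_j}(t)\bigr\|_{L^4_x} \leq \bigl\|\widetilde{G}^{(n), \tau}_{k_1, \ldots, k_j}\bigr\|_{L^\infty_t L^4_x} \cdot I_\tau(t),
\]
where $I_\tau(t)$ is precisely the iterated time integral defined after Corollary \eqref{equ:g_term_cor}.

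Finally, Lemma \ref{time_decay} identifies $I_\tau(t) = t^{j-1}/C_\tau$, and since $t \in I$ with $|I|$ denoting its length, we have $t^{j-1} \leq |I|^{j-1}$. Taking the supremum in $t$ and then summing over $\tau \in \mathcal{T}_j$ completes the proof. There is no real obstacle here; the only small care is making sure Minkowski applies cleanly through the nested (non-product) simplex domain, which it does because each inner integral has a domain depending only on the outer variable and the integrand is a non-negative measurable function of $(x, t_1, \ldots, t_{j-1})$.
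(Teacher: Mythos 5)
Your proposal is correct and is exactly the argument the paper intends: the paper states that the proposition "simply follows from Proposition \ref{trees} and Lemma \ref{time_decay}," and your triangle inequality over $\mathcal{T}_j$, Minkowski through the iterated time integrals, and the identification $I_\tau(t)=t^{j-1}/C_\tau$ with $t^{j-1}\leq |I|^{j-1}$ fill in precisely those details.
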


We next turn to establishing a suitable bound for
\begin{align}\label{equ:g_bds0}
\|\widetilde{ G}^{(n), \tau}_{k_1, \ldots, k_j}\|_{L^\infty_t L^4_x}.
\end{align}

\begin{proposition}\label{prop:g_bds2}
Let $n\geq 1$, and $2 \leq j \leq 2^n$.  There exists $C > 0$ such that for any $\tau\in \mathcal T_j$,  we have
\[
\|\widetilde{G}^{(n), \tau}_{k_1, \ldots, k_j}\|_{L^\infty_t L^4_x}  \leq C^{\frac{j-1}{2}}  \cdot \sqrt{C_\tau} \prod_{i=1}^j \|P_{k_i} \phi_0 \|_{\dot H^1_x},
\]
where $C_\tau$ is given in \eqref{c_alpha_recurr}.
\end{proposition}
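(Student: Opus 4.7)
The plan is to induct on $j$, the number of leaves of $\tau$. Since the recursion defining $\widetilde{G}^{(n),\tau}_{k_1,\ldots,k_j}$ depends on $n$ only through the requirement that $\tau$ have height at most $n$, the quantity is really indexed by $\tau$ alone, so I will suppress the superscript $n$ during the estimation.

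For the base case $j=1$ (not covered by the proposition's statement but needed to start the induction), $\widetilde{G}^{\tau_1}_{k_1} = F_{k_1} = P_{k_1}\partial W(t)\phi$ has Fourier support in $B(k_1,1)$. Using Bernstein (Lemma~\ref{bernstein}) together with the fact that $\cos(t|\nabla|)$ and $|\nabla|^{-1}\sin(t|\nabla|)$ act as $L^2_x$-bounded Fourier multipliers, so that $\|P_{k_1}\partial W(t)\phi\|_{L^2_x} \leq \|P_{k_1}\phi_0\|_{\dot H^1_x}$ (recalling $\phi_1=0$), I obtain $\|F_{k_1}\|_{L^\infty_t L^4_x} \leq C_0\pi^{1/4}\|P_{k_1}\phi_0\|_{\dot H^1_x}$.

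For the inductive step $j\geq 2$ with $\tau = \tau_i\cup\tau_{j-i}$, the corollary following Proposition~\ref{trees} gives that $\widetilde{G}^\tau_{k_1,\ldots,k_j}$ has Fourier support in a ball of radius $j$, so Bernstein supplies $\|\widetilde{G}^\tau\|_{L^4_x}\leq C_0\pi^{1/4}\sqrt{j}\,\|\widetilde{G}^\tau\|_{L^2_x}$. Applying the $L^2_x$-boundedness of $M_{01}$ (Lemma~\ref{lem:mult_bds}) and then Hölder, I get
\[
\|\widetilde{G}^\tau\|_{L^\infty_t L^4_x}\leq C_0\pi^{1/4}\sqrt{j}\,\|\widetilde{G}^{\tau_i}\|_{L^\infty_t L^4_x}\,\|\widetilde{G}^{\tau_{j-i}}\|_{L^\infty_t L^4_x}.
\]
I then feed in the inductive hypothesis on the two factors (falling back to the base case whenever $i=1$ or $j-i=1$) and invoke the recurrence $C_\tau = (j-1)C_{\tau_i}C_{\tau_{j-i}}$ from \eqref{c_alpha_recurr} to rewrite $\sqrt{C_{\tau_i}C_{\tau_{j-i}}} = \sqrt{C_\tau/(j-1)}$. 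The surplus factor from Bernstein is thereby precisely $\sqrt{j/(j-1)}\leq\sqrt{2}$, which together with the universal Bernstein constant $C_0\pi^{1/4}$ and a bounded number of base-case overheads gets absorbed by choosing $C$ sufficiently large (depending only on $C_0$), promoting $C^{(j-2)/2}$ to $C^{(j-1)/2}$ as required.

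The key---and essentially only---subtlety is the exact matching of the Bernstein factor $\sqrt{j}$ at each internal node against the combinatorial factor $\sqrt{j-1}$ built into $C_\tau$. Since $\sqrt{j/(j-1)}$ is uniformly bounded by $\sqrt{2}$ and the induction closes one level at a time, there is no compounding difficulty across the tree. I expect no genuine analytic obstacle: the coefficient $C_\tau$ from \eqref{c_alpha_recurr} is tailored precisely to capture the iterated Bernstein loss arising from the $L^2_x\to L^4_x$ transitions along $\tau$, and the whole argument amounts to disciplined bookkeeping of universal constants.
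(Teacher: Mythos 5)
Your proposal is correct and follows essentially the same route as the paper: unit-scale Bernstein on the product (support in a ball of radius $j$), $L^2_x$-boundedness of $M_{01}$, H\"older, and an induction that closes via the recurrence $C_\tau=(j-1)C_{\tau_i}C_{\tau_{j-i}}$, with the Bernstein loss $C_0\pi^{1/4}\sqrt{j}$ absorbed into $\sqrt{C(j-1)}$. The only cosmetic difference is that you induct on the number of leaves $j$ (treating the leaf case $F_{k_1}$ explicitly), while the paper phrases the induction on $n$ with base case $n=1$; the estimates are identical.
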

\begin{proof}
We prove this by induction on $n$.   For $n=1$, by definition, Lemma \ref{bernstein}, Lemma \ref{lem:mult_bds},  and H\"older, we have
\begin{align}
\|\widetilde{G}^{(1),\tau}_{k_1, k_2}\|_{ L^4_x} &= \|M_{01}\bigl( F_{k_1} \cdot  F_{k_2} \bigr)\|_{ L^4_x} \\
& \leq C_0  \sqrt{2} \pi^\frac 14\|M_{01}\bigl( F_{k_1} \cdot  F_{k_2} \bigr)\|_{ L^2_x} \\
& \leq   C_0  \sqrt{2} \pi^\frac 14\| F_{k_1} \|_{ L^4_x}  \| F_{k_2} \|_{ L^4_x} \\
& \leq   C_0^3  \sqrt{2} \pi^\frac 34\| F_{k_1} \|_{ L^2_x}  \| F_{k_2} \|_{ L^2_x} .
\end{align}

 For general $n$, we similarly have
\begin{align}
\|\widetilde{G}^{(n),\tau}_{k_1, \ldots, k_j}\|_{ L^4_x} &\leq \|M_{01}\bigl( \widetilde{G}^{(n-1), \tau_i}_{k_1, \ldots, k_i} \cdot \widetilde{G}^{(n-1), \tau_{j-i}}_{k_{i+1}, \ldots, k_j}  \bigr)\|_{ L^4_x} \\
& \leq C_0 \pi^\frac 14 j^\frac 12 \|M_{01}\bigl(  \widetilde{G}^{(n-1), \tau_i}_{k_1, \ldots, k_i} \cdot \widetilde{G}^{(n-1), \tau_{j-i}}_{k_{i+1}, \ldots, k_j}  \bigr)\|_{ L^2_x} \\
& \leq   C_0 \pi^\frac 14 j^\frac 12 \| \widetilde{G}^{(n-1), \tau_i}_{k_1, \ldots, k_i} \|_{L^\infty_t L^4_x}  \cdot \|\widetilde{G}^{(n-1), \tau_{i-j}}_{k_{i+1}, \ldots, k_j} \|_{ L^4_x} .
\end{align}

We now let $ C > 0$ be such that 
\be\label{eq_C}
C_0^2\pi^\frac 12 j \leq C (j-1)
\ee
for all $j \geq 2$. Hence, using the inductive hypothesis and \eqref{eq_C}, we obtain
\begin{align}
\|\widetilde{G}^{(n),\tau}_{k_1, \ldots, k_j}\|_{ L^4_x} & \leq  C_0 \pi^\frac 14 j^\frac 12  C^{\frac{i-1}{2} }\sqrt{C_{\tau_{i} }} \prod_{\ell=1}^i \|P_{k_\ell} \phi_0 \|_{\dot H^1_x}  C^{ \frac{j-i-1}{2}} \sqrt{ C_{\tau_{j-i}}} \prod_{\ell= i + 1}^j \|P_{k_{\ell}} \phi_0 \|_{\dot H^1_x}\\
& \leq C^{\frac{j-2}{2}}\sqrt{ C(j-1) C_{\tau_{i}} C_{\tau_{j-i}}    } \prod_{\ell= i}^j \|P_{k_{\ell}} \phi_0 \|_{\dot H^1_x},
\end{align}
which then yields the result by \eqref{c_alpha_recurr}.
\end{proof}

\begin{remark} \label{bern_dim}
The factor from Bernstein's inequality will be $C_0 j^{\frac{3}{4}}$ in dimension three and $C_0 j$ in dimension four, in which case we will obtain a modified bound for Proposition \ref{prop:g_bds2}, specifically the power of $C_\tau$ will be $3/4$ in dimension three and $1$ in dimension four.
\end{remark}
We conclude this section by stating the required $L^2_t L^4_x$ bounds we will rely on in the proof of the main theorem. 
\begin{proposition}
For any $n \geq 0$, $1\leq j \leq 2^n$ and $G^{(n)}_{k_1, \ldots, k_j}$ as above, we have
\begin{align} \label{equ:necessary_g_bds}
\| G^{(n)}_{k_1, \ldots, k_j}\|_{ L^2_t L^4_x} \leq |I|^{j - \frac{1}{2}} C^{\frac{j-1}{2}}\frac{1}{\sqrt{C_{\tau, j}^*}} \prod_{i=1}^{j}  \|P_{k_i} \phi_0 \|_{\dot H^1_x}.
\end{align}
\end{proposition}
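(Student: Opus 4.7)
The plan is to decompose $G^{(n)}_{k_1,\ldots,k_j}$ according to the tree expansion of Proposition~\ref{trees}, bound each tree contribution separately, and then absorb the combinatorial count of trees into the constant $C^{(j-1)/2}$. By Proposition~\ref{trees} we have
\[
G^{(n)}_{k_1, \ldots, k_j} = \sum_{\tau \in \mathcal{T}_j} G^{(n), \tau}_{k_1, \ldots, k_j},
\]
so by the triangle inequality it suffices to bound $\|G^{(n), \tau}_{k_1, \ldots, k_j}\|_{L^2_t L^4_x}$ for each fixed $\tau$ and then sum.

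For a fixed $\tau \in \mathcal{T}_j$, I would first pass from $L^2_t$ to $L^\infty_t$ on the bounded interval $I$, paying a factor of $|I|^{1/2}$:
\[
\|G^{(n), \tau}_{k_1, \ldots, k_j}\|_{L^2_t L^4_x} \leq |I|^{1/2} \|G^{(n), \tau}_{k_1, \ldots, k_j}\|_{L^\infty_t L^4_x}.
\]
Then, applying Minkowski's inequality inside the $j-1$ iterated time integrals in the representation \eqref{equ:g_term_cor}, using the computation of $I_\tau(t) = t^{j-1}/C_\tau$ from Lemma~\ref{time_decay}, and invoking Proposition~\ref{prop:g_bds2} to control the integrand, I obtain
\[
\|G^{(n), \tau}_{k_1, \ldots, k_j}\|_{L^\infty_t L^4_x} \leq \frac{|I|^{j-1}}{C_\tau}\,\|\widetilde{G}^{(n), \tau}_{k_1, \ldots, k_j}\|_{L^\infty_t L^4_x} \leq \frac{|I|^{j-1} C^{(j-1)/2} \sqrt{C_\tau}}{C_\tau}\prod_{i=1}^j \|P_{k_i} \phi_0\|_{\dot H^1_x}.
\]
Combining the two previous displays and using $C_\tau \geq C_{\tau,j}^*$ in the denominator yields, for each $\tau$,
\[
\|G^{(n), \tau}_{k_1, \ldots, k_j}\|_{L^2_t L^4_x} \leq \frac{|I|^{j-1/2} C^{(j-1)/2}}{\sqrt{C_{\tau,j}^*}}\prod_{i=1}^j \|P_{k_i}\phi_0\|_{\dot H^1_x}.
\]

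The remaining step, and the only mildly subtle point, is to handle the sum $\sum_{\tau \in \mathcal{T}_j} 1$ that appears after the triangle inequality. The cardinality $|\mathcal{T}_j|$ equals the Catalan number $\frac{1}{j}\binom{2(j-1)}{j-1}$, which is bounded by $4^{j-1}$. Since $4^{j-1} = 16^{(j-1)/2}$, this geometric factor can be absorbed by enlarging the absolute constant $C$ in \eqref{eq_C} (e.g.\ replacing $C$ by $16 C$), after which the stated estimate \eqref{equ:necessary_g_bds} follows for all $j\geq 2$; the case $j=1$ is immediate from Proposition~\ref{prop:improved_strichartz} together with the unit-scale Bernstein inequality and the convention $C_{\tau,1}^* = 1$.
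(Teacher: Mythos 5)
Your proposal is correct and follows essentially the same route as the paper: the paper's proof is precisely a summary of Proposition \ref{prop:g_rep} (tree decomposition plus the $|I|^{j-1}/C_\tau$ time factor from Lemma \ref{time_decay}), Proposition \ref{prop:g_bds2}, H\"older in time giving $|I|^{1/2}$, the bound $C_\tau \geq C_{\tau,j}^*$, and absorption of the Catalan number $|\mathcal{T}_j|$ into $C^{(j-1)/2}$, with $j=1$ handled via Proposition \ref{prop:improved_strichartz}. You merely spell out these steps explicitly, which is fine.
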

\begin{proof}
For $j=1$, this holds by Proposition \ref{prop:improved_strichartz}, while for $j \geq 2$, this result is a summary of the previous bounds, together with the fact that the number of full binary trees with $j$ leaves is given by the $j$-th Catalan number, which is exponential in $j$, and hence can be absorbed into the $C^{\frac{j-1}{2}}$ factor.
\end{proof}

\section{Proof of main probabilistic bounds}\label{sec:prob_bds}
By the discussion at the beginning of Section \ref{sec:scheme}, and in view of Proposition \ref{prop:g_rep0}, the goal of this section is to establish our main probabilistic bounds for the expression
\[
\biggl\| \biggl\| \sum_{k_1, \ldots, k_j} \varepsilon_{k_1} \ldots \varepsilon_{k_j} G^{(n)}_{k_1, \ldots, k_j}  \biggr\|_{L^2_t L^4_x}\biggr\|_{L^p_\omega}.
\]
This will suffice for establishing bounds on the iterates. 

Provided $p \geq 4$, we can use Minkowski's inequality to bring the $L^p_\omega$ norm inside, and  hence we first consider 
\begin{align}
\biggl\| \sum_{k_1, \ldots, k_j} \varepsilon_{k_1} \ldots \varepsilon_{k_j} G^{(n)}_{k_1, \ldots, k_j}  \biggr\|_{L^p_\omega}  .
\end{align}
We group the summation over $k_1, \ldots, k_j$ based on how many distinct indices $k_i$ appear, ranging from $r=1, \ldots, j$.  So for example,  when $r=j$, all the indices are different, while if $r=1$, then all the indices are the same, and the product of the random variables reduces to $\varepsilon^j_k$.  Moreover, we observe that for each $r$, we ask in how many ways we can distribute $j$ (labeled) indices into $r$ (unlabeled) groups.  Stirling numbers of the second kind provide an answer to this and are labeled by $S(j,r)$ (more on $S(j,r)$ below).  We let $P_{j,r}$ denote the collection of such distributions.  An element of $P_{j,r}$ can be identified with $\vec{k}$ to denote a vector of length $j$, which has $r$ distinct indices $k_i$ appearing $\alpha_i$ times so that $\alpha_1+\alpha_2+\cdots+\alpha_r=j$.  

Ultimately we need to estimate
\begin{align}\label{our_est}
\biggl\| \sum_{k_1, \ldots, k_j} \varepsilon_{k_1} \ldots \varepsilon_{k_j} G^{(n)}_{k_1, \ldots, k_j}  \biggr\|_{L^p_\omega}  
& \leq \sum_{r=1}^j  \sum_{ \vec{k} \in P_{j,r}} \biggl\|  \sum_{k_1, \ldots,  k_r, k_i \neq k_\ell}   \varepsilon_{k_1}^{\alpha_1} \ldots \varepsilon_{k_r}^{\alpha_r}   G^{(n)}_{\vec{k} }  \biggr\|_{L^p_\omega}.
\end{align}

First we consider one of the terms
\begin{align}\label{sumtoconsider}
\biggl\|  \sum_{k_1, \ldots,  k_r, k_i \neq k_\ell}   \varepsilon_{k_1}^{\alpha_1} \ldots \varepsilon_{k_r}^{\alpha_r}   G^{(n)}_{\vec{k} }  \biggr\|_{L^p_\omega}.
\end{align}
We would like to apply Lemma \ref{blemma} to estimate this expression. However, we are not quite yet in a suitable context since the same random variable may appear in more than one summand. Thus we follow an argument used in \cite{ErdosYau}. 

We fix a large $N \in \mathbb{N}$ and we let $\mathbb{N}_N := \{1, \ldots, N\}$. We will estimate
\[
 \sum_{k_1, \ldots,  k_r, k_i \neq k_\ell,\, k_i \in \mathbb{N}_N}   \varepsilon_{k_1}^{\alpha_1} \ldots \varepsilon_{k_r}^{\alpha_r}   G^{(n)}_{\vec{k} }
\]
uniformly in $N$ which will enable us to conclude the desired bound for \eqref{sumtoconsider}. We use the identity
\begin{align}\label{partition}
1 = \frac{1}{r^{N-r}} \sum_{I_1 \sqcup \ldots \sqcup I_r = \mathbb{N}_N} \mathbf{1}(k_1 \in I_1) \cdots \mathbf{1}(k_r \in I_r),
\end{align}
where $I \sqcup J$ denote a disjoint union, and the sum is over all such disjoint unions of $r$-many arbitrary subsets $I_1, \ldots, I_r$ of $\mathbb{N}_N$. Since the summand is zero if any of the $I_i = \varnothing$, we will assume that the $I_i$ are non-empty. We note that this identity holds for all tuples $(k_1, \ldots, k_r)$ with $k_i \in \mathbb{N}_N$ such that $k_i \neq k_\ell$ for $i \neq \ell$. 

The normalization factor in \eqref{partition} is the number of distinct disjoint unions of $\mathbb{N}_N$ a fixed tuple $(k_1, \ldots, k_r)$ with $k_i \neq k_\ell$ for $i \neq \ell$, may appear in. We compute it as follows: we need to allocate the points
\[
\{1, \ldots, N\} \setminus \{k_1, \ldots, k_r\}
\]
to sets $I_1 \ni k_1, \ldots, I_r \ni k_r$. There are $r$-many options for each of the remaining $N-r$ points, which yields the normalization factor.

Consequently, using \eqref{partition} we can write 
\begin{align}
& \sum_{k_1, \ldots,  k_r, k_i \neq k_\ell, |k_i| \leq N}   \varepsilon_{k_1}^{\alpha_1} \ldots \varepsilon_{k_r}^{\alpha_r}   G^{(n)}_{\vec{k} } \\
 & = \frac{1}{r^{N-r}}  \sum_{I_1 \sqcup \ldots \sqcup I_r= \mathbb{N}_N} \sum_{k_1 \in I_1, \ldots,  k_r \in I_r}   \varepsilon_{k_1}^{\alpha_1} \ldots \varepsilon_{k_r}^{\alpha_r}   G^{(n)}_{\vec{k} } .
 \end{align}
We first tackle the case where each $\alpha_i$ is odd. We may now use Lemma \ref{blemma} and we define
\[
b_{k_1} =  \sum_{k_2 \in I_2, \ldots,  k_r \in I_r}   \varepsilon_{k_2}^{\alpha_2} \ldots \varepsilon_{k_r}^{\alpha_r}   G^{(n)}_{\vec{k} }.
\]
and we write
\begin{align}
\biggl\| \sum_{k_1 \in I_1, \ldots,  k_r \in I_r}   \varepsilon_{k_1}^{\alpha_1} \ldots \varepsilon_{k_r}^{\alpha_r}   G^{(n)}_{\vec{k} } \biggr\|_{L^p_\omega}  = \biggl\| \sum_{k_1 \in I_1} \varepsilon_{k_1}^{\alpha_1} b_{k_1} \biggr\|_{L^p_\omega}.
\end{align}
By construction, it is now the case that the family $\{b_{k_1}\}$ is independent of the family $\{\varepsilon_{k_1}\}$,  and applying Lemma \ref{blemma}, we obtain
\begin{align}
\bigl\|  \sum_{k_1 \in I_1}   \varepsilon_{k_1}^{\alpha_1}  b_{k_1}  \bigr\|_{L^p_\omega} &\leq C \sqrt{p} \biggl\| \biggl( \sum_{k_1 \in I_1}   |b_{k_1}|^2 \biggr)^{\frac{1}{2}}  \biggr\|_{L^p_\omega}\\
& \leq  C \sqrt{p} \biggl(\sum_{k_1 \in I_1}   \|b_{k_1}\|_{L^{p}_\omega}^2 \biggr)^{\frac{1}{2}}.
\end{align}
We iterate this process and eventually we get
\begin{align}
\biggl\| \sum_{k_1 \in I_1, \ldots,  k_r \in I_r}   \varepsilon_{k_1}^{\alpha_1} \ldots \varepsilon_{k_r}^{\alpha_r}   G^{(n)}_{\vec{k} } \biggr\|_{L^p_\omega} \leq C^r p^{\frac{r}{2}} \biggl(\sum_{k_1 \in I_1, \ldots,  k_r \in I_r}  \bigl|     G^{(n)}_{\vec{k} }  \bigr|^2 \biggr)^{\frac{1}{2}} .
\end{align}
In the general case, where the $\alpha_i$ are not all odd, we order the indices and apply the triangle inequality for the even indices. More precisely, if there are even powers $\alpha_i$, then $\varepsilon_{k_i}^{\alpha_i} = 1$, and in that case, we use the triangle inequality at that step in the estimates, for instance
\begin{align}
\bigl\|  \sum_{k_1 \in I_1}   \varepsilon_{k_1}^{\alpha_1}  b_{k_1}  \bigr\|_{L^p_\omega} \leq   \sum_{k_1 \in I_1}   \|  b_{k_1} \|_{L^p_\omega}.
\end{align}
Thus, reordering the $k_i$ and letting $r_{\textup{o}}$ denote the number of odd $\alpha_i$, we obtain
\begin{align}
\bigl\|  \sum_{k_1 \in I_1}   \varepsilon_{k_1}^{\alpha_1}  b_{k_1}  \bigr\|_{L^p_\omega} \leq \sum_{k_i \in I_i, \alpha_i \textup{ even}} C^{r_{\textup{o}}} p^{\frac{r_{\textup{o}}}{2}}  \biggl(\sum_{k_i \in I_i, \alpha_i \textup{ odd}}  \bigl|   G^{(n)}_{\vec{k} }  \bigr|^2 \biggr)^{\frac{1}{2}}.
\end{align}
We put everything together and we have
\begin{align}
& \sum_{r=1}^j \sum_{ \vec{k} \in P_{j,r}} \biggl\| \sum_{k_1, \ldots,  k_r, k_i \neq k_\ell, |k_i| \leq N}   \varepsilon_{k_1}^{\alpha_1} \ldots \varepsilon_{k_r}^{\alpha_r}   G^{(n)}_{\vec{k} } \biggr\|_{L^p_\omega} \\
 & \leq  \sum_{r=1}^j \sum_{ \vec{k} \in P_{j,r}} \frac{1}{r^{N-r}}  \sum_{I_1 \sqcup \ldots \sqcup I_r= \mathbb{N}_N} \sum_{k_i \in I_i, \alpha_i \textup{ even}} C^{r_{\textup{o}}} p^{\frac{r_{\textup{o}}}{2}}  \biggl(\sum_{k_i \in I_i, \alpha_i \textup{ odd}}  \bigl|   G^{(n)}_{\vec{k} }  \bigr|^2 \biggr)^{\frac{1}{2}}.
 \end{align}
 Now we estimate the expression
 \begin{align}
 \frac{1}{r^{N-r}}  \sum_{I_1 \sqcup \ldots \sqcup I_r= \mathbb{N}_N} 1.
 \end{align}
 The sum will count the number of surjective functions on $N$ items into $r$ items, and by inclusion exclusion this is equal to 
 \[
 r! S(N,r),
 \]
 where $S(N,r)$ is the Stirling number of the second kind, given by the formula
\begin{align}\label{equ:stirling_second}
S(N,r) = \frac{1}{r!} \sum_{j=0}^r (-1)^{r-j} \binom{r}{j} j^N.
\end{align}
We can use the trivial bound
\[
r! S(N, r) \leq r^N,
\]
which is the total number of functions on $N$ items into $r$ items, and hence we obtain that
 \begin{align} \label{surj_function_asymptotics}
 \frac{1}{r^{N-r}}  \sum_{I_1 \sqcup \ldots \sqcup I_r= \mathbb{N}_N} 1 = \frac{r! S(N,r)}{r^{N-r}} \leq r^r.
 \end{align}
Thus we obtain a uniform bound in $N$, which yields the desired bound
 \begin{align} \label{equ:where_we_are}
\biggl\| \sum_{k_1, \ldots, k_j} \varepsilon_{k_1} \ldots \varepsilon_{k_j} G^{(n)}_{k_1, \ldots, k_j}  \biggr\|_{L^p_\omega}   & \leq  \sum_{r=1}^j \sum_{ \vec{k} \in P_{j,r}} \sum_{k_i, \alpha_i \textup{ even}}  C^{r_{\textup{o}}} p^{\frac{r_{\textup{o}}}{2}}  r^r\biggl(\sum_{k_i, \alpha_i \textup{ odd}}  \bigl|    G^{(n)}_{\vec{k} }  \bigr|^2 \biggr)^{\frac{1}{2}}.
\end{align}

We are now ready to establish the relevant bound for \eqref{our_est} in the following proposition.
\begin{proposition}\label{prop:iterate_strichartz_bds}
Let $p\geq 4$, and  let $G^{(n)}_{k_1, \ldots, k_j}$ be given as above. Then
\begin{align}
\biggl\| \bigl\| \sum_{k_1, \ldots, k_j} \varepsilon_{k_1} \ldots \varepsilon_{k_j} G^{(n)}_{k_1, \ldots, k_j}  \bigr\|_{L^2_t L^4_x}\biggr\|_{L^p_\omega} \leq C^j  \|\phi_0\|_{\dot H_x^1}^j |I|^{j-\frac{1}{2}} p^{\frac{j}{2}} \frac{j!}{\sqrt{C_{\tau, j}^*}}
\end{align}
for a constant $C$ independent of $n$.
\end{proposition}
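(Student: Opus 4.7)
The plan is to combine the already-established probabilistic bound \eqref{equ:where_we_are} with the deterministic $L^2_t L^4_x$ estimates from \eqref{equ:necessary_g_bds}, shuffling norms via Minkowski's inequality and absorbing the frequency summations via the discrete embedding $\ell^2 \hookrightarrow \ell^q$ for $q \geq 2$.

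First, since $p \geq 4 \geq 2$, I would apply Minkowski's integral inequality twice to move the $L^p_\omega$ norm inside the $L^2_t L^4_x$ norm. Pointwise in $(t,x)$, the inner expression is then precisely the object controlled by \eqref{equ:where_we_are}; substituting that bound and then using the triangle inequality for $L^2_t L^4_x$ together with a further application of Minkowski's inequality (to bring the spatial/temporal norm inside the $\ell^2$ sum over the odd-multiplicity indices) would yield
\[
\biggl\| \bigl\| \sum_{k_1,\ldots,k_j} \varepsilon_{k_1}\cdots \varepsilon_{k_j} G^{(n)}_{k_1,\ldots,k_j} \bigr\|_{L^2_t L^4_x} \biggr\|_{L^p_\omega} \leq \sum_{r=1}^j \sum_{\vec k \in P_{j,r}} C^{r_{\textup{o}}} p^{r_{\textup{o}}/2} r^r \sum_{\substack{k_i \\ \alpha_i \textup{ even}}} \biggl( \sum_{\substack{k_i \\ \alpha_i \textup{ odd}}} \|G^{(n)}_{\vec k}\|_{L^2_t L^4_x}^2 \biggr)^{1/2}.
\]

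Next I would invoke \eqref{equ:necessary_g_bds} to estimate each $\|G^{(n)}_{\vec k}\|_{L^2_t L^4_x}$ by $|I|^{j-1/2} C^{(j-1)/2}(C^*_{\tau,j})^{-1/2} \prod_{\ell=1}^r \|P_{k_\ell}\phi_0\|_{\dot H^1_x}^{\alpha_\ell}$, where the product is relabeled in terms of the $r$ distinct indices in $\vec k$ with respective multiplicities $\alpha_1,\ldots,\alpha_r$. The remaining single-index sums can then be handled by $\ell^2 \hookrightarrow \ell^q$: for every $\alpha_\ell \geq 1$,
\[
\biggl(\sum_{k_\ell}\|P_{k_\ell}\phi_0\|_{\dot H^1_x}^{2\alpha_\ell}\biggr)^{1/2} \leq \biggl(\sum_{k_\ell}\|P_{k_\ell}\phi_0\|_{\dot H^1_x}^2\biggr)^{\alpha_\ell/2} \leq \|\phi_0\|_{\dot H^1_x}^{\alpha_\ell},
\]
and analogously (using $\alpha_\ell \geq 2$) for each even multiplicity factor pulled outside the square root. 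Since $\sum_\ell \alpha_\ell = j$, the product of all such bounds contributes a total spatial factor $\|\phi_0\|_{\dot H^1_x}^j$.

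The final step is the combinatorial bookkeeping. Using the Stirling-number bound $|P_{j,r}| = S(j,r) \leq r^j/r!$ together with the weight $r^r$ and Stirling's approximation $j^j \leq e^j j!$, each summand in the sum over $r$ is bounded by $e^{2j} j!$; the outer sum over $r \in \{1,\ldots,j\}$ costs only an additional factor $j \leq 2^j$, while the crude estimates $p^{r_{\textup{o}}/2} \leq p^{j/2}$ and $C^{r_{\textup{o}}} \leq C^j$ close the argument. Collecting all constants into a single $C$ yields the claimed bound. The main obstacle is precisely this combinatorial bookkeeping: \eqref{equ:where_we_are} involves a sum over partitions weighted by $r^r$, which a priori could produce a factor much larger than $j!$, and a careful use of Stirling's formula combined with the $S(j,r)$ estimate is essential to contain the growth to exactly $C^j j!$ — a rate compatible with the subsequent summation over iteration depth $n$ in the proof of Theorem \ref{main}.
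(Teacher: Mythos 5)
Your proposal is correct and follows essentially the same route as the paper: Minkowski to pass the $L^p_\omega$ norm inside $L^2_tL^4_x$, the pointwise bound \eqref{equ:where_we_are}, the deterministic estimate \eqref{equ:necessary_g_bds}, the $\ell^2$ summation of $\|P_k\phi_0\|_{\dot H^1_x}$ giving $\|\phi_0\|_{\dot H^1_x}^j$, and a Stirling-number count of $P_{j,r}$ concluded via $j^j\leq e^j j!$. The only deviation is cosmetic: you use the crude bound $S(j,r)\leq r^j/r!$ where the paper invokes the refined estimate $S(j,r)\leq \tfrac12\binom{j}{r}r^{j-r}$, but both yield the same $C^j j!$ growth.
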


\begin{proof}
In this proof, we will use $C$ to denote explicit constants whose values do not depend on the various parameters. We allow the value of $C$ to change line to line. 

By Minkowski's inequality we first have
\begin{align}
\biggl\| \bigl\| \sum_{k_1, \ldots, k_j} \varepsilon_{k_1} \ldots \varepsilon_{k_j} G^{(n)}_{k_1, \ldots, k_j}  \bigr\|_{L^2_t L^4_x}\biggr\|_{L^p_\omega} 
\leq  \biggl\| \bigl\| \sum_{k_1, \ldots, k_j} \varepsilon_{k_1} \ldots \varepsilon_{k_j} G^{(n)}_{k_1, \ldots, k_j} \bigr\|_{L^p_\omega}  \biggr\|_{L^2_t L^4_x},
\end{align}
so we can take the $L^2_t L^4_x$ norm of \eqref{equ:where_we_are} and use \eqref{equ:necessary_g_bds} to obtain
\begin{align}
 & \sum_{r=1}^j  \sum_{ \vec{k} \in P_{j,r}} \sum_{k_i, \alpha_i \textup{ even}} C^{r_{\textup{o}}} p^{\frac{r_{\textup{o}}}{2}} r^r \biggl(\sum_{k_i, \alpha_i \textup{ odd}}  \bigl\|   G^{(n)}_{\vec{k} }  \bigr\|_{L^2_tL^4_x}^2 \biggr)^{\frac{1}{2}}\\
 & \leq  \frac{|I|^{j-\frac{1}{2}} C^{\frac{j-1}{2}}}{\sqrt{C_{\tau, j}^*}} \sum_{r=1}^j  C^r p^{\frac{r}{2}} r^r\sum_{ \vec{k} \in P_{j,r}} \sum_{k_i, \alpha_i \textup{ even}}   \biggl(\sum_{k_i, \alpha_i \textup{ odd}}  \biggl(   \prod_{i=1}^r \|P_{k_i} \phi_0 \|^{\alpha_i}_{\dot H^1_x} \biggr)^2 \biggr)^{\frac{1}{2}}\\
  & \leq \frac{|I|^{j-\frac{1}{2}} C^{\frac{j-1}{2}}}{\sqrt{C_{\tau, j}^*}}  C^j p^{\frac{j}{2}}\sum_{r=1}^j r^r \sum_{ \vec{k} \in P_{j,r}} \sum_{k_i, \alpha_i \textup{ even}}  
  \biggl(\sum_{k_i, \alpha_i \textup{ odd}}     \prod_{i=1}^r \|P_{k_i} \phi_0 \|^{2\alpha_i}_{\dot H^1_x} \biggr)^{\frac{1}{2}},
\end{align}
where we used $r_{\textup{o}}\leq r\leq j$.  Next we observe that 
\be
\sum_{k_i, \alpha_i \textup{ even}}  
  \biggl(\sum_{k_i, \alpha_i \textup{ odd}}     \prod_{i=1}^r \|P_{k_i} \phi_0 \|^{2\alpha_i}_{\dot H^1_x} \biggr)^{\frac{1}{2}}\leq\|\phi_0\|_{\dot H_x^1}^j.
\ee
We are then left with estimating
\[
\|\phi_0\|_{\dot H_x^1}^j \frac{|I|^{j-\frac{1}{2}} C^{\frac{j-1}{2}}}{\sqrt{C_{\tau, j}^*}} C^j p^{\frac{j}{2}} \sum_{r=1}^j r^r\sum_{ \vec{k} \in P_{j,r}} 1.
\]
Since there are $S(j,r)$ many vectors in $P_{j,r}$ we use a refined upper bound for the Stirling numbers of the second kind (c.f. \cite[Theorem 3]{RD69}):
\[
S(j,r) \leq \frac{1}{2} {j \choose r} r^{j-r}.
\]
Together with the upper bound for the binomial coefficients
\[
{j \choose r} \leq \left( \frac{e j}{r} \right)^r,
\]
this yields
\begin{align}
\|\phi_0\|_{\dot H_x^1}^j \frac{|I|^{j-\frac{1}{2}} C^{\frac{j-1}{2}}}{\sqrt{C_{\tau, j}^*}} C^j p^{\frac{j}{2}}  \sum_{r=1}^j \frac{1}{2}  \left( \frac{e j}{r} \right)^r r^{j} &=  \|\phi_0\|_{\dot H_x^1}^j \frac{|I|^{j-\frac{1}{2}} C^{\frac{j-1}{2}}}{\sqrt{C_{\tau, j}^*}} C^j p^{\frac{j}{2}}  \sum_{r=1}^j \frac{1}{2} e^r j^r r^{j -r} \\
& \leq  \|\phi_0\|_{\dot H_x^1}^j \frac{|I|^{j-\frac{1}{2}} C^{\frac{j-1}{2}}}{\sqrt{C_{\tau, j}^*}} C^j p^{\frac{j}{2}} j^j  \sum_{r=1}^j \frac{1}{2} e^r.
\end{align}
Ultimately, we obtain that there exists some constant $C$ (possibly different from above, but still uniform in $n$) such that 
\begin{equation} \label{final_strichartz_ests}
\begin{split}
\biggl\| \bigl\|  \sum_{k_1, \ldots, k_j} \varepsilon_{k_1} \ldots \varepsilon_{k_j} G^{(n)}_{k_1, \ldots, k_j} \bigr\|_{L^2_t L^4_x} \biggr\|_{L^p_\omega} 
 \leq  C^j  \|\phi_0\|_{\dot H_x^1}^j |I|^{j-\frac{1}{2}} p^{\frac{j}{2}} \frac{j!}{\sqrt{C_{\tau, j}^*}}.
\end{split}
\end{equation}
This yields the desired bound and concludes the proof.
\end{proof}
\begin{corollary}\label{summary}
Let $n\geq0$. Then
\begin{align}
\bigl\|\bigl\| \partial u^{(n)} \bigr\|_{L^2_t L^4_x} \bigr\|_{L^p_\omega} \leq  \sum_{j=1}^{2^n} C^j  \|\phi_0\|_{\dot H_x^1}^j |I|^{j-\frac{1}{2}} p^{\frac{j}{2}} j!
\end{align}
for a constant $C$ independent of $n$.
\end{corollary}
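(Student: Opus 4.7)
The plan is to combine the deterministic decomposition of $\partial u^{(n)}$ from Proposition \ref{prop:g_rep0} with the uniform-in-$n$ probabilistic estimate already established in Proposition \ref{prop:iterate_strichartz_bds}. Concretely, starting from
\[
\partial u^{(n)} = \sum_{j=1}^{2^n} \sum_{k_1,\ldots,k_j} \varepsilon_{k_1}\cdots\varepsilon_{k_j}\, G^{(n)}_{k_1,\ldots,k_j},
\]
I would first apply Minkowski's inequality to bring $L^2_t L^4_x$ outside of $L^p_\omega$ (or leave them in the given order and use Minkowski the other way, as was done inside Proposition \ref{prop:iterate_strichartz_bds}), together with the triangle inequality in the outer sum over $j$, to reduce matters to controlling
\[
\sum_{j=1}^{2^n} \biggl\| \biggl\|\sum_{k_1,\ldots,k_j} \varepsilon_{k_1}\cdots\varepsilon_{k_j}\, G^{(n)}_{k_1,\ldots,k_j}\biggr\|_{L^2_tL^4_x}\biggr\|_{L^p_\omega}.
\]

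Next, I would invoke Proposition \ref{prop:iterate_strichartz_bds} term by term, which (for $p\geq 4$) bounds each summand by $C^j \|\phi_0\|_{\dot H^1_x}^j |I|^{j-1/2} p^{j/2} j!/\sqrt{C_{\tau,j}^*}$, with $C$ independent of $n$. To match the stated form of the corollary it remains to dispose of the factor $1/\sqrt{C_{\tau,j}^*}$, which is accomplished by observing that $C_{\tau,j}^* \geq 1$. This is a short induction on $j$: in the base case $j=1$ the recurrence \eqref{c_alpha_recurr} gives $C_{\tau_1}=1$, while for $j\geq 2$ the relation $C_\tau = (j-1)\,C_{\tau_i}C_{\tau_{j-i}}$ together with $j-1\geq 1$ and the inductive hypothesis $C_{\tau_i},C_{\tau_{j-i}}\geq 1$ yields $C_\tau\geq 1$ for every $\tau\in\mathcal{T}_j$, and hence $C_{\tau,j}^*\geq 1$. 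Summing the resulting pointwise bound over $j=1,\dots,2^n$ produces the claimed inequality.

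Essentially no genuine obstacle remains at this stage: the combinatorial expansion, the tree bookkeeping, the Bernstein estimates on $\widetilde G^{(n),\tau}_{k_1,\ldots,k_j}$, and the delicate Rademacher analysis culminating in the Stirling-number bound \eqref{surj_function_asymptotics} have all been carried out in Sections \ref{sec:scheme} and \ref{sec:prob_bds}. The only mildly subtle point is to verify that the constant $C$ appearing in the final estimate can be taken independent of $n$, which is automatic because the constant produced by Proposition \ref{prop:iterate_strichartz_bds} is already uniform in $n$ and the extra factor $1/\sqrt{C_{\tau,j}^*}$ is simply dropped. The proof of the corollary is therefore essentially an exercise in assembly.
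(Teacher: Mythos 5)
Your proposal is correct and follows the same route as the paper: expand $\partial u^{(n)}$ via Proposition \ref{prop:g_rep0}, apply Proposition \ref{prop:iterate_strichartz_bds} to each $j$-term, and discard the factor $1/\sqrt{C_{\tau,j}^*}$ using the trivial lower bound $C_{\tau,j}^* \geq 1$. Your short induction justifying $C_{\tau,j}^*\geq 1$ from the recurrence \eqref{c_alpha_recurr} is a harmless elaboration of what the paper simply calls trivial.
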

\begin{proof}
This follows by Proposition \ref{prop:g_rep0} and the trivial lower bound $C_{\tau, j}^* \geq 1$.
\end{proof}
\begin{remark}
We note that using the upper bound in \eqref{c_tau_star_upper}, we have that
\begin{align}
C_{\tau, 2^n}^* \leq \prod_{k=1}^{n} 2^{k 2^{n-k}}.
\end{align}
Now we can compute exactly that
\begin{align}
\sum_{k=1}^{n} k 2^{n-k} =2^{n + 1} -n - 2,
\end{align}
hence we see the factor of $C_{\tau,j}^*$ in our bound \eqref{final_strichartz_ests} does not suffice to cancel the factorial growth.
\end{remark}

\section{Proof of Theorem \protect{\ref{main}}} \label{sec:proof_main}
The main theorem will follow from the following result and Lemma \ref{prob_est}.

\begin{proposition}\label{prop:finite_nsets}
There exist constants $C > 0$ and $\delta > 0$ such that for every $n \in \mathbb{N}$, and every interval $I$ with $|I| < \delta$ the following holds
\begin{align}
\biggl\|\bigl\| \partial u^{(n)} \bigr\|_{L^2_t L^4_x(I \times \mathbb{R}^2)} \biggr\|_{L^p_\omega} & \leq C p^{\frac{2^n}{2}}  \|\phi_0\|_{\dot H_x^1}  |I|^{\frac{1}{2}}(2^n)!
\end{align}
\end{proposition}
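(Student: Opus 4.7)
The starting point is Corollary \ref{summary}, which already gives
\[
\bigl\|\bigl\| \partial u^{(n)} \bigr\|_{L^2_t L^4_x} \bigr\|_{L^p_\omega} \leq \sum_{j=1}^{2^n} C^j \|\phi_0\|_{\dot H^1_x}^j |I|^{j-\frac{1}{2}} p^{\frac{j}{2}} j!,
\]
so everything reduces to showing that, once $|I|$ is small enough, this finite sum is dominated by its worst-case coefficient structure $p^{2^n/2}(2^n)!$ times a single $\|\phi_0\|_{\dot H^1_x}|I|^{1/2}$. The key quantitative observation is that $p^{(j-2^n)/2}\leq 1$ (since $p\geq 1$ and $j\leq 2^n$) and $j!/(2^n)!\leq 1$, so each term of the sum can be artificially inflated to carry the desired $p^{2^n/2}(2^n)!$ factor at the price of a small geometric factor.

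Concretely, for each $1 \leq j \leq 2^n$ I would factor the $j$-th summand as
\[
C^j \|\phi_0\|^j |I|^{j-\frac12} p^{\frac{j}{2}} j! \;=\; \bigl(\|\phi_0\| |I|^{\frac12} p^{\frac{2^n}{2}} (2^n)! \bigr)\cdot C \cdot (C\|\phi_0\| |I|)^{j-1} \cdot p^{\frac{j-2^n}{2}} \cdot \frac{j!}{(2^n)!},
\]
and then drop the last two factors using the bounds noted above, giving
\[
a_j \;\leq\; C \, \|\phi_0\|_{\dot H^1_x} |I|^{\frac12} p^{\frac{2^n}{2}} (2^n)! \cdot (C\|\phi_0\|_{\dot H^1_x} |I|)^{j-1}.
\]
The final step is to choose the universal smallness threshold $\delta$. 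Set $\delta := (2C\|\phi_0\|_{\dot H^1_x})^{-1}$, so that for every $|I|<\delta$ we have $C\|\phi_0\|_{\dot H^1_x} |I| < \tfrac12$. Summing the resulting geometric series
\[
\sum_{j=1}^{2^n} (C\|\phi_0\|_{\dot H^1_x} |I|)^{j-1} \;\leq\; \sum_{j=0}^{\infty} 2^{-j} \;=\; 2,
\]
we get
\[
\bigl\|\bigl\| \partial u^{(n)} \bigr\|_{L^2_t L^4_x(I\times \R^2)} \bigr\|_{L^p_\omega} \;\leq\; 2C\,\|\phi_0\|_{\dot H^1_x} |I|^{\frac12} p^{\frac{2^n}{2}} (2^n)!,
\]
which is the stated bound with the constant $C$ of the proposition equal to $2C$ and $\delta$ depending on $\|\phi_0\|_{\dot H^1_x}$.

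The proof is really just a bookkeeping lemma and there is no serious technical obstacle: the entire content has been pushed into the earlier sections (the tree representation, the Bernstein step in Proposition \ref{prop:g_bds2}, and the randomization argument in Proposition \ref{prop:iterate_strichartz_bds}). The only subtle point worth flagging is that the threshold $\delta$ must depend on $\|\phi_0\|_{\dot H^1_x}$ but must be \emph{independent} of both $n$ and $p$; this is possible precisely because the ratio $a_{j+1}/a_j$ carries a factor of $p^{1/2}(j+1)$ that we do not try to absorb—the crude bounds $p^{(j-2^n)/2}\leq 1$ and $j!/(2^n)!\leq 1$ are exactly what let us decouple the $p$- and $j$-dependence from the geometric convergence condition on $|I|$.
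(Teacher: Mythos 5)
Your proposal is correct and follows essentially the same route as the paper: both start from Corollary \ref{summary}, absorb $p^{j/2}j!$ into the worst-case factor $p^{2^n/2}(2^n)!$, and sum the remaining geometric series under the smallness condition $C\|\phi_0\|_{\dot H^1_x}|I|<\tfrac12$. Your remark that $\delta$ depends on $\|\phi_0\|_{\dot H^1_x}$ but not on $n$ or $p$ matches the paper's choice exactly.
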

\begin{proof}
By Corollary \ref{summary}
\begin{align}
\biggl\|\bigl\| \partial u^{(n)} \bigr\|_{L^2_t L^4_x} \biggr\|_{L^p_\omega} \leq  \sum_{j=1}^{2^n} C^j  \|\phi_0\|_{\dot H_x^1}^j |I|^{j-\frac{1}{2}} p^{\frac{j}{2}} j!.
\end{align}

We rewrite this bound as
\begin{align}
& C  \|\phi_0\|_{\dot H_x^1} |I|^{\frac{1}{2}} \sum_{j=1}^{2^n} C^{j -1}  \|\phi_0\|_{\dot H_x^1}^{j-1} |I|^{j-1} p^{\frac{j}{2}} j!\\
 & \leq  C  \|\phi_0\|_{\dot H_x^1} |I|^{\frac{1}{2}}p^{\frac{2^n}{2}}   (2^n)!\sum_{j=0}^{2^n-1} C^{j }  \|\phi_0\|_{\dot H_x^1}^{j} |I|^{j},
\end{align}
and we note that the sum in $j$ is bounded by one, say, provided we choose $I \subseteq \R$ such that
\[
C \|\phi_0\|_{\dot H_x^1} |I| < \frac{1}{2}. \qedhere
\]
\end{proof}

\begin{corollary}\label{cor:full_meas}
There exist $C, c > 0$, independent of $n$, such that for any $I \subseteq \R$ with $|I|$ sufficiently small, we have
\[
\mathbb{P} \bigl\{ \bigl\| \partial u^{(n)} \bigr\|_{L^2_t L^4_x(I \times \mathbb{R}^2)} > \lambda \bigr\} \leq C \exp \left(- 
\frac{c\lambda^{\frac{1}{2^{n-1}}}} { (2^n \|\phi_0\|_{\dot H_x^1} |I|^\frac 12)^{\frac{1}{2^{n-1}} }}\right).
\]
In particular, for any fixed $n$,  we have
\[
\mathbb{P} \bigl\{ \bigl\| \partial u^{(n)} \bigr\|_{L^2_t L^4_x(I \times \mathbb{R}^2)} < \infty \bigr\} = 1.
\]
\end{corollary}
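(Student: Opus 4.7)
The plan is to apply Lemma \ref{prob_est} directly to the nonnegative measurable function $F(\omega) := \| \partial u^{(n)}(\omega) \|_{L^2_t L^4_x(I \times \R^2)}$, taking as input the moment bound supplied by Proposition \ref{prop:finite_nsets}. That proposition asserts that for every $p \geq 4$,
\[
\|F\|_{L^p_\omega} \leq \bigl( C \|\phi_0\|_{\dot H^1_x} |I|^{1/2} (2^n)! \bigr) \cdot p^{2^n/2},
\]
which fits exactly the hypothesis of Lemma \ref{prob_est} with $k = 2^n$ and with the $n$-dependent factor $\|\phi_0\|_{\dot H^1_x} |I|^{1/2} (2^n)!$ playing the role of $N^{-\alpha}$.

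Applying Lemma \ref{prob_est} then yields, for every $\lambda > 0$, a tail bound of the form
\[
\bP(F > \lambda) \leq C_1 \exp\Bigl(-c\, \lambda^{2/2^n} \bigl(\|\phi_0\|_{\dot H^1_x} |I|^{1/2} (2^n)!\bigr)^{-2/2^n}\Bigr).
\]
Since $2/2^n = 1/2^{n-1}$, this produces a stretched-exponential decay in $\lambda$ with exponent $1/2^{n-1}$. To recover the precise form stated in the corollary I would then use Stirling's formula to compare $(2^n)!$ with a suitable power of $2^n$, after which dimensional prefactors can be absorbed into the constant $c$.

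The second assertion of the corollary is then an immediate consequence. For fixed $n$, I would decompose
\[
\bigl\{ \|\partial u^{(n)}\|_{L^2_t L^4_x(I \times \R^2)} < \infty \bigr\} = \bigcup_{\ell=1}^{\infty} \bigl\{ \|\partial u^{(n)}\|_{L^2_t L^4_x(I \times \R^2)} \leq \ell \bigr\},
\]
and observe that the tail bound forces $\bP(F > \ell) \to 0$ as $\ell \to \infty$, so continuity of probability from below yields $\bP(F < \infty) = 1$.

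The substantive work is already accomplished in Proposition \ref{prop:finite_nsets}; the main obstacle, which has already been surmounted, is establishing the $p$-moment bound with the correct power $p^{2^n/2}$ and the controlled combinatorial growth in the coefficient. What remains at the level of this corollary is an essentially mechanical invocation of Lemma \ref{prob_est}, with the only delicate bookkeeping being the Stirling-based comparison that casts the factorial $(2^n)!$ in the displayed form involving $2^n$.
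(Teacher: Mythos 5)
Your proposal is essentially the paper's own proof: one applies Lemma \ref{prob_est} to the moment bound of Proposition \ref{prop:finite_nsets} with $k=2^n$ and $\alpha=1$, obtaining the stretched-exponential tail with exponent $\lambda^{1/2^{n-1}}$, and the almost-sure finiteness follows from the layer-cake decomposition you describe (the same device the paper uses in Proposition \ref{prop:improved_strichartz}). The only caveat is your final Stirling step: it cannot convert the factor $(2^n)!$ coming out of Lemma \ref{prob_est} into the factor $2^n$ appearing in the displayed estimate, since $((2^n)!)^{1/2^{n-1}}$ grows like $(2^n/e)^2$ and replacing $(2^n)!$ by $2^n$ in the denominator would \emph{strengthen} the bound with a constant $c$ independent of $n$; this discrepancy, however, is already present in the corollary as stated (the natural output has $(2^n)!$ there), and it is immaterial for the ``in particular'' conclusion and for the proof of Theorem \ref{main}.
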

\begin{proof}
This is an application of Lemma \ref{prob_est} to the result of Proposition \ref{prop:finite_nsets}  with $k=2^n$, $\alpha=1$.
\end{proof}

The proof of our main theorem now follows readily from these estimates.

\begin{proof}[Proof of Theorem \protect{\ref{main}}]
We fix $(\phi_0, \phi_1)$ and $T > 0$  so that $T$ is sufficiently small in the sense required for Corollary \ref{cor:full_meas}. By the energy estimates, we recall that we have
\begin{align}\label{equ:energy}
\|u^{(n)} \|_{L^\infty_t \dot H^1_x} + \|\partial_t u^{(n)} \|_{L^\infty_t L^2_x}  \lesssim\|u^{(0)}\|_{L^\infty_t \dot H^1_x} + \|\partial_t u^{(0)}\|_{L^\infty_t  L^2_x}  +   \|\partial u^{(n-1)}\|^2_{L^2 L^4_x}.
\end{align}
Since $\phi^\omega$ almost surely in $\dot H^1(\R^2) \times L^2_x(\R^2)$, we let $\Sigma$ be such that for $\omega \in \Sigma$,
\[
\|\phi_0^\omega \|_{L^\infty_x \dot H^1_x} + \| \phi_1^\omega \|_{L^\infty_x  L^2_x}  < \infty,
\]
which then provides a bound for 
\[
\|u^{(0)}\|_{L^\infty_t \dot H^1_x} + \|\partial_t u^{(0)}\|_{L^\infty_t  L^2_x}.
\]
Now, for $n \geq 0$, we let
\[
\Sigma_n = \bigl\{ \omega \,:\,\bigl\| \partial u^{(n)} \bigr\|_{L^2_t L^4_x(I \times \mathbb{R}^2)}  < \infty\bigr\},
\]
then $\Sigma_n$ has full measure by Proposition \ref{prop:improved_strichartz} and  Corollary \ref{cor:full_meas}. We then set
\[
\Sigma_T := \Sigma \cap \bigcap_{n=0}^\infty \Sigma_n,
\]
and we observe that $\mathbb{P}(\Sigma_T) = 1$ since it is the countable intersection of sets of full measure. 

Now, for any $\omega \in\Sigma_T$, and any $n \geq 0$, we have by \eqref{equ:energy} that 
\[
\bigl(u^{(n)}, \partial_t u^{(n)}\bigr) \in L^\infty_t \bigl([0,T]; \dot H^1_x(\R^2) \times L^2_x(\R^2)\bigr).
\]
The continuity follows from the definition of the iterates. This completes the proof.
\end{proof}
  
\bibliographystyle{myamsplain}
\bibliography{ref}

\end{document}